\newcommand{\la}{\lambda}
\newcommand{\La}{\Lambda}
\newcommand{\p}{\phi}
\newcommand{\D}{\Delta}
\newcommand{\SO}{{\mathcal{O}}}
\newcommand{\Op}{{\mathcal{O}} p}
\newcommand{\BB}{\mathbb{B}}
\newcommand{\C}{\mathbb{C}}
\newcommand{\R}{\mathbb{R}}
\newcommand{\W}{\mathbb{W}}
\newcommand{\DD}{\mathbb{D}}
\renewcommand{\S}{\mathbb{S}}
\newcommand{\sse}{\subseteq}
\newcommand{\st}{\text{st}}
\newcommand{\ot}{\text{ot}}
\newcommand{\Cont}{\operatorname{Cont}}
\newcommand{\Symp}{\operatorname{Symp}}
\newcommand{\ob}{\operatorname{ob}}
\newcommand{\im}{\operatorname{im}}
\newcommand{\lr}{\longrightarrow}
\newcommand{\dd}{\partial}
\newtheorem{proposition}{Proposition}
\newtheorem{theorem}[proposition]{Theorem}
\newtheorem{lemma}[proposition]{Lemma}
\newtheorem{remark}[proposition]{Remark}
\begin{document}

\title{The Legendrian Whitney trick}

\subjclass{Primary: 53D10. Secondary: 53D15, 57R17.}
\date{}

\keywords{contact structure, isocontact embeddings, $h$--principle}

\author{Roger Casals}
\address{University of California Davis, Dept. of Mathematics, Shields Avenue, Davis, CA 95616, USA}
\email{casals@math.ucdavis.edu}

\author{Dishant M. Pancholi}
\address{Chennai Mathematical Institute,
H1 SIPCOT IT Park, Kelambakkam,
Siruseri Pincode:603 103, TN, India.}
\email{dishant@cmi.ac.in}

\author{Francisco Presas}
\address{Instituto de Ciencias Matem\'aticas CSIC-UAM-UC3M-UCM,
C. Nicol\'as Cabrera, 13-15, 28049, Madrid, Spain}
\email{fpresas@icmat.es}

\begin{abstract}
In this article, we prove a Legendrian Whitney trick which allows for the removal of intersections between codimension-two contact submanifolds and Legendrian submanifolds, assuming such a smooth cancellation is possible. This technique is applied to show the existence h-principle for codimension-two contact embeddings with a prescribed contact structure.
\end{abstract}
\maketitle


\section{Introduction}\label{sec:intro}

The object of this article will be to show the existence of the Legendrian Whitney trick, which removes smoothly canceling intersections between codimension-2 contact submanifolds and Legendrian submanifolds.

The smooth Whitney trick is a method for removing points of intersection between two smooth submanifolds \cite{Kir,Whitney44}. This technique rests at the center of differential topology, with direct applications to embeddings problems \cite{Sha,Whitney44} and the h-cobordism theorem \cite{MilnorHCob,Smale}. Since its first appearance \cite{Whitney44}, the Whitney trick has been generalized to include multiple submanifolds \cite{MW}, intersections of positive dimension \cite{Go, Ha} and adapted for 4-dimensional manifolds \cite{Casson,Freedman}. At its core, the Whitney trick states that a smooth cancellation is possible if certain algebraic obstructions vanish, thus measuring the difference between algebraic topology and geometric topology. In the present article, we study the Legendrian Whitney trick, in the context of contact structures, further crystallizing the difference between smooth topology and contact topology.

Let $(\DD^{2n-1} , \xi_{st})$ be the standard contact structure in the smooth $(2n-1)$-dimensional disk, and $(B,\xi)$ an arbitrary contact structure in the $(2n+1)$-dimensional standard smooth disk
$$B=\{(x,y,z)\in\R^{n}\times\R^{n-1}\times\R^2:|x|^2+|y|^2+|z|^2\leq1\}.$$
The two smooth inclusions
$$\DD^{2n-1}=\{(x,y,z)\in\R^{n}\times\R^{n-1}\times\R^2:|x|^2+|y|^2\leq1, z=0\}\subseteq B,$$
$$\S^{n}=\{(x,y,z_1,z_2)\in\R^{n}\times\R^{n-1}\times\R\times\R:|x|^2+|z_1|^2=0.5,p=z_2=0\}\subseteq B,$$
will be referred to as the smoothly standard embeddings. The main result of the article reads as follows:

\begin{theorem}[Legendrian Whitney Trick] \label{thm:Legenvoid}
	Let $\phi: (\DD^{2n-1},\dd \DD^{2n-1};\xi_{st}) \lr (B,\dd B;\xi)$ be a proper isocontact embedding, $n\geq 2$, and $\la: \S^n \lr (B, \xi)$ a Legendrian embedding, such that $\phi,\la$ are smoothly standard. Then, there exists a compactly supported family of isocontact embeddings $\phi_t: (\DD^{2n-1} , \xi_{st}) \lr (B, \xi)$, $t\in[0,1]$, $\phi_0= \phi$, such that $\im(\phi_1)\cap\im(\la)=\emptyset$.
\end{theorem}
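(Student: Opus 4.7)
The plan is to realize $\phi_t$ as $\Phi_t \circ \phi$, where $\Phi_t$ is an ambient contact Hamiltonian flow on $(B, \xi)$ supported near $\la(\S^n)$ that displaces $\phi(\DD^{2n-1})$ off $\la(\S^n)$ along the Reeb direction of a local Legendrian normal form.

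First, I would invoke the contact Weinstein neighborhood theorem for Legendrian submanifolds to identify a tubular neighborhood $\SN(\la)$ of $\la(\S^n)$ contactomorphically with a neighborhood of the zero section in $J^1(\S^n) = T^*\S^n \times \R_t$, with contact form $\alpha = dt - \theta_{\text{can}}$ and Reeb vector field $R = \dd_t$. Using the smoothly-standard hypothesis, I would refine this identification via a relative contact neighborhood argument so that in the local model the intersection $S^{n-1} = \phi(\DD^{2n-1}) \cap \la(\S^n)$ sits in the zero section at $t = 0$, and the Reeb cylinder $\{p = 0\}$ over $\la(\S^n)$ meets $\phi(\DD^{2n-1})$ precisely along $S^{n-1}$.

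Next, I would pick a smooth cutoff $\chi: B \to [0, 1]$ that equals $1$ on a smaller neighborhood of $\la(\S^n)$ and is compactly supported in $\SN(\la)$. The contact Hamiltonian $H = \chi$ generates a contact vector field $X_H$ whose flow $\Phi_t$ is a compactly-supported family of contactomorphisms of $B$; in the region where $\chi = 1$, $X_H$ coincides with $R = \dd_t$, so $\Phi_t$ acts as translation by $t$ in the $t$-coordinate. Setting $\phi_t := \Phi_t \circ \phi$ yields the required family of isocontact embeddings with $\phi_0 = \phi$. Disjointness at $t = 1$ then follows: a point of $\im(\phi_1)$ lying on $\la(\S^n)$ would correspond to some $w \in \phi(\DD^{2n-1})$ with $\Phi_1(w)$ on the zero section, which in the local model forces $w$ to have zero cotangent coordinate and $t$-coordinate $-1$; the normal-form conclusion rules this out, as every point of $\phi(\DD^{2n-1}) \cap \SN(\la)$ with zero cotangent coordinate sits at $t = 0$.

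The main obstacle I anticipate is the refined normalization step: choosing the Weinstein identification so that $\phi(\DD^{2n-1})$ meets the Reeb cylinder $\{p = 0\}$ only along $S^{n-1}$ in the local model. This is essentially a relative contact neighborhood theorem for the pair $(\la(\S^n), \phi(\DD^{2n-1}))$, whose geometric input is the smoothly-standard hypothesis and which is verified by an explicit computation in the coordinates $(x, y, z_1, z_2)$. Once the normal form is in place, the Hamiltonian construction and the verification of disjointness are direct.
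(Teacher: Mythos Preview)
Your proposal has a genuine gap at exactly the step you flag as the ``main obstacle,'' and it is not merely a technical hurdle: the normalization you ask for is generically impossible, and no relative neighborhood theorem will produce it.

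In the model $J^1(\S^n)=T^*\S^n\times\R_t$, the Reeb cylinder $\{p=0\}=\S^n\times\R_t$ has dimension $n+1$, while $\phi(\DD)$ has dimension $2n-1$. A transverse intersection of the two in the $(2n+1)$--dimensional ambient therefore has dimension $(n+1)+(2n-1)-(2n+1)=n-1$, which is one larger than $\dim\Sigma=n-2$. So $\phi(\DD)\cap\{p=0\}$ is, near $S$, an $(n-1)$--dimensional submanifold containing $\Sigma$ as a hypersurface; asking that it coincide with $\Sigma$ (and hence sit entirely at $t=0$) is a non-generic, codimension--one condition. The smoothly-standard hypothesis only controls smooth isotopy classes; it says nothing about how the \emph{contact} submanifold $\phi(\DD)$ meets the Reeb orbits through $S$, and a Moser-type argument cannot force this because the local contact model of the pair $(\la(\S^n),\phi(\DD))$ near $\Sigma$ genuinely depends on more data than the smooth picture. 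Concretely, for $n=2$ the intersection $\Sigma$ is a finite set of points, while $\phi(\DD^3)\cap\{p=0\}$ is a collection of arcs through those points, and nothing forces those arcs to lie in the slice $\{t=0\}$. A second, independent problem is the cut--off: in the region where $0<\chi<1$ the contact vector field $X_\chi$ is not $\chi\,\partial_t$ but acquires $d\chi$--terms moving the $p$-- and $q$--coordinates, so your endpoint analysis (``$\Phi_1(w)\in S$ forces $p(w)=0$, $t(w)=-1$'') is invalid there and new intersections can be created.

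The paper's proof proceeds along entirely different lines, and the difference is the whole content of the theorem. Rather than working in a neighborhood of $S$ and pushing in a transverse (Reeb) direction, one first builds a \emph{Legendrian Whitney bridge}: an auxiliary Legendrian $\W$, diffeomorphic to a quotient of $W\times[0,1]$ for some $(n-1)$--manifold $W$ with $\partial W=\Sigma$, which meets $S$ along $W\times\{0\}$ and $\DD$ along $W\times\{1\}$. One then works in a Weinstein neighborhood $J^1(\W)$ of the \emph{bridge}, and the displacing flow is along the $[0,1]$--coordinate $q_n$ tangent to $\W$, not along the Reeb direction. The delicate part is exactly the cut--off you glossed over: one must design the contact Hamiltonian (via a carefully chosen function $G(p_n)$ with controlled primitive, and a second cut--off $R(q_{n-1})$) so that the $p_n$--coordinate is preserved and no new intersections with $\DD$ appear in the transition region. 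This is the contact analogue of sliding along a Whitney disk, not of pushing along a normal bundle, and the construction of the bridge is where the dimensional hypothesis $n\ge 2$ and the smoothly-standard assumption are actually used.
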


Theorem \ref{thm:Legenvoid} stands in contrast with many known obstructions \cite{BEM,CMP,El,Vo} for the removal of intersections,  via contact isotopies, between certain subsets in contact manifolds. In particular, pairs of overtwisted disks are known to admit a rigid behavior \cite{Vo} whereby there exists a smooth isotopy removing their intersections but no such contact isotopy exists. In fact, as shown in \cite[Section 8.2]{CMP}, the intersection theory between the overtwisted contact germ and certain Legendrian spheres is strictly richer than the smooth topology. The new insight in Theorem \ref{thm:Legenvoid} is that in higher dimensions the interaction between Legendrian submanifolds and contact submanifolds is much closer to smooth topology.

In the same vein as the smooth Whitney trick, Theorem \ref{thm:Legenvoid} should lead to a source of applications in higher-dimensional contact topology. In this article, we apply Theorem \ref{thm:Legenvoid} to resolve the existence problem for isocontact submanifolds:

\begin{theorem} \label{thm:main}
Let $(N, \xi_N)$ and $(M, \xi_M)$ be contact manifolds, with $\dim(M)= \dim(N)+2\geq5$, and $(f^0, F^0_s): (N, \xi_N) \lr (M, \xi_M)$ a formal isocontact embedding. Then there exists a family $(f^t, F^t_s)$ of formal isocontact embeddings of $N$ in $M$ such that $(f^1, F^1_s=df^1)$ is an isocontact embedding.
\end{theorem}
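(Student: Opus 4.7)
The plan is to prove Theorem \ref{thm:main} by an inductive extension of the formal isocontact data over a handle decomposition of $N$, where Theorem \ref{thm:Legenvoid} plays the decisive role in the final, top-dimensional inductive step.

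First, fix a Morse function on $N$ giving a handle decomposition, together with a compatible contact handle decomposition of $M$. After a generic perturbation of $f^0$ and the formal data $F^0_s$, we may assume that $f^0$ is in general position with respect to the cocores of both decompositions. Using the h-principle for isotropic embeddings in codimension at least two (e.g.\ due to Gromov and Eliashberg-Mishachev), we homotope $(f^0, F^0_s)$ through formal isocontact embeddings so that the result is a genuine isocontact embedding on an open neighborhood of the sub-critical skeleton of $N$. The remaining problem is then to extend over the top $(2n-1)$-dimensional handles of $N$, whose attaching spheres are already standardly isocontactly embedded inside Darboux balls of $M$.

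For each such top handle, the sub-critical boundary data gives, in a Darboux ball $(B,\xi)\subseteq M$, a model $\dd\DD^{2n-1}$ on which we must cap off isocontactly. An isocontact filling $\phi: (\DD^{2n-1},\xi_{st}) \to (B,\xi)$ exists locally in each such ball, but when $\phi$ is placed globally inside $M$ its image may meet Legendrian submanifolds that have already been constructed---in particular, the Legendrian $n$-sphere cores of critical contact handles of $M$, or Legendrian pieces arising from the previously placed handles of $N$. Because the ambient formal isocontact embedding exists, the excess intersections are algebraically zero and thus come in canceling pairs; since $\dim M \geq 5$, the classical smooth Whitney trick provides, for each such pair, a smooth Whitney disk. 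After isotoping a neighborhood of the canceling pair into the smoothly standard configuration of the hypothesis of Theorem \ref{thm:Legenvoid}, the Legendrian Whitney trick produces a compactly supported family $\phi_t$ of isocontact embeddings of $\DD^{2n-1}$ removing the intersection. Iterating over all canceling pairs and all top handles produces the required isocontact embedding $f^1$, and the parametric nature of the h-principles involved carries the formal homotopy $F^t_s$ through the entire construction.

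The main technical obstacle is arranging the smoothly standard hypothesis of Theorem \ref{thm:Legenvoid} at each canceling pair of intersections: one must ensure that the smooth Whitney disk lies in a Darboux chart in which the relevant Legendrian sphere is standardly embedded, disjointly from the rest of the already-constructed isocontact image of $N$. This is a delicate but essentially classical general-position argument, relying on the contact neighborhood theorem for isotropic submanifolds and on the hypothesis $\dim M \geq 5$ that guarantees the existence of smooth Whitney disks. Once this setup is in place, Theorem \ref{thm:Legenvoid} does the essential contact-geometric work of upgrading the smooth cancellation to an isocontact cancellation, and the h-principle of Theorem \ref{thm:main} follows.
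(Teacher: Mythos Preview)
Your proposal has a genuine gap at the step where you write ``An isocontact filling $\phi: (\DD^{2n-1},\xi_{st}) \to (B,\xi)$ exists locally in each such ball.'' This assertion is precisely the hard local problem (Theorem~\ref{thm:iso-contact_disk_embeddding_rel_boundary} in the paper), and it is \emph{not} automatic: the Darboux ball $(B,\xi)$ is tight, so no known $h$--principle produces an isocontact disk with prescribed formally isocontact boundary data inside it. You have assumed exactly what must be proved. Your subsequent use of Theorem~\ref{thm:Legenvoid} --- to remove intersections of this (unconstructed) filling with ``Legendrian cores of critical contact handles of $M$'' --- is then beside the point; moreover $M$ is an arbitrary contact manifold and need not carry any Weinstein or contact handle decomposition, and there is no reason such intersections would come in algebraically cancelling pairs.

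The paper's argument is structurally different, and Theorem~\ref{thm:Legenvoid} plays a different role from the one you envision. The reduction to the local problem is via Gromov's $h$--principle for isocontact embeddings of \emph{open} manifolds applied to $N\setminus\overline{B_N}$, leaving a single disk to fill. To fill that disk, the paper \emph{modifies} the target: it inserts an overtwisted disk in $(B,\xi)$ away from the image (by changing the monodromy of an open book $\ob(T^*S^n,\tau_{S^n})$ to $\tau_{S^n}^{-1}$), and then invokes the Borman--Eliashberg--Murphy $h$--principle in the overtwisted target to obtain a genuine isocontact embedding $\phi^1$. The Legendrian Whitney trick is then applied to a single, specific Legendrian sphere $S_\pi$ --- the zero section of the page at angle $\pi$ --- whose complement, by Lemma~\ref{lemma:flowing}, can be contact--isotoped away from the region where the contact structure was altered. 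Thus Theorem~\ref{thm:Legenvoid} is not used to cancel incidental intersections arising in an inductive construction; it is used to certify that the isocontact embedding produced by the overtwisted $h$--principle can be pushed back into the original, unmodified $(B,\xi)$. This ``borrow overtwistedness, then disengage from it via the Legendrian Whitney trick'' mechanism is the missing idea in your outline.
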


Theorem \ref{thm:main} shows that any codimension-2 smooth submanifold in a contact manifold $(M,\xi_M)$ can be smoothly isotoped to be a contact submanifold, with a chosen induced contact structure, as long as the obstructions from algebraic topology vanish. In pair with M. Gromov's h-principle for codimension-$k$ submanifolds, $k\geq4$, Theorem \ref{thm:main} resolves the existence h-principle for isocontact submanifolds in {\it any} formal class, i.e. the existence of codimension-2 contact embeddings with a prescribed contact structure in the domain.

The study of contact submanifolds in higher dimensions has recently seen new developments, including the articles \cite{CasalsEtnyre,CMP,EF,EL,HondaHuang19,Ol,PP}. Theorem \ref{thm:Legenvoid} focuses on the interplay between Legendrian submanifolds and contact submanifolds, the first result of its type. Theorem \ref{thm:main} addresses the {\it existence} of a contact submanifold in a given smooth class. Regarding {\it uniqueness}, the first author and J. Etnyre \cite[Theorem 1.1]{CasalsEtnyre} recently constructed the first examples of contact submanifolds in all dimensions which are smoothly isotopic but {\it not} contact isotopic. Amongst these new developments, the third author and S. Pandit \cite[Theorem 1]{PP} discuss which contact structures can be realized, and see also \cite[Theorem 4.12]{Ol} for an independent argument. In addition, the recent article \cite[Corollary 1.3.5]{HondaHuang19} gives a convex surface construction of contact representatives. Thus, this present work is part of a new collection of articles establishing the foundations for the study of contact submanifolds in higher dimensions. In particular, Theorem \ref{thm:main} should also follow by combining the results from the third author \cite{PP} with the higher-dimensional convex theory \cite{HondaHuang19}.

The present arguments for Theorem \ref{thm:Legenvoid} and \ref{thm:main} apply to higher dimensional contact manifolds, and crucially use the dimensional hypothesis. In the remaining case, that of transverse knots in contact 3-manifolds, Theorem \ref{thm:main} holds if we allow ourselves to change the self-linking number \cite[Section 3.3]{GeigesCont}, and thus the conclusion is only on the smooth type of the knot, rather than the formal isocontact type of the embedding. This situation is in line with the 3-dimensional Thurston-Bennequin inequality \cite[Section 4.6.5]{GeigesCont} and the difference between Legendrian knots and higher-dimensional Legendrian submanifolds \cite{CMP}.

{\bf Organization.} This article is organized as follows: Section \ref{sec:whitney} proves Theorem \ref{thm:Legenvoid} and Section \ref{sec:main} shows Theorem \ref{thm:main} as an application. Regarding notation, given a set $C$ we will denote by $\mathcal O p(C)$ an arbitrarily small open neighborhood of it.
%

{\bf Acknowledgements.} The authors are thankful to V.L. Ginzburg and A. del Pino for helpful conversations. R. Casals is also grateful to J.B. Etnyre for useful discussions in their collaboration, and a wonderful talk by J.V. Horn-Morris in the conference ``Geometric structures on 3 and 4 manifolds'', which particularly sparked his interest in the study of higher-dimensional contact embeddings. R.~Casals is supported by the NSF grant DMS-1841913 and a BBVA Research Fellowship. F. Presas is supported by the Spanish Research Projects SEV-2015-0554, MTM2016-79400-P, and MTM2015-72876-EXP. D. Pancholi and F. Presas want to thank ICTP (Trieste, Italy) for the support and the help that they have provided, through their visitors program sponsored by the Simons Foundation, during the two visits that they did to the Center in order to develop their work in this article.\hfill$\Box$
%


\section{Legendrian Whitney trick.} \label{sec:whitney}

The classical smooth Whitney trick \cite{Kir,Whitney44} starts with two oriented submanifolds $S_0,S_1\sse M$ inside a simply connected orientable manifold $M$, which intersect at finitely many (signed) points. The main assumption is the existence of two oppositely oriented intersection points $p_0,p_1\in S_0\cap S_1$. The Whitney trick then consists of canceling these two intersection points by placing them as boundaries of two embedded curves $\psi_0:[0,1]  \lr S_0$ and $\psi_1:[0,1] \lr S_1$. The crucial part of the argument relies on the existence of an embedded smooth disk $\psi:[0,1] \times[0,1]\lr M$ which intersects $S_0$ and $S_1$ exactly in the images of $\psi_i$, $i=0,1$, only along its boundary, and satisfies $\psi(t,0)=p_0$ and $\psi(t,1)=p_1$. This disk, known as a Whitney disk, is then used to construct a smooth flow that pushes $\psi_0$ to $\psi_t=\psi(t,-)$ for time $t$ and is supported in an arbitrarily small neighborhood of the disk. The image of $S_0$ through the flow at time $t=1+ \delta$, for $\delta\in\R^+$ arbitrarily small, becomes displaced from $S_1$ at the points $p_0,p_1\in S_1$, and no new intersections are introduced.

The Legendrian Whitney trick generalizes the smooth Whitney trick to submanifolds whose generic intersection has dimension strictly larger than zero. For instance, a Legendrian sphere in $(\S^{2n+1},\xi_\st)$ generically intersects a codimension-two contact submanifold in a smooth submanifold of dimension $(n-2)$. In this context, the underlying smooth arguments in the contact setting are in line with A. Haefliger's proof of unknotting in high codimensions \cite{Ha}, and see also A. Shapiro's theory of deformation cells \cite[Section 5]{Sha}.

Let $(M,\xi)$ be a $(2n+1)$-dimensional contact manifold, the intersection of a Legendrian sphere $S\sse(M,\xi)$ and a codimension-two contact submanifold $(\mathbb{D},\xi|_{\mathbb{D}})\sse(M,\xi)$  is a $(n-2)$-dimensional isotropic submanifold $\Sigma\sse \DD\cap S$. For the Legendrian Whitney trick, the smooth topology of $\Sigma$ might be non-trivial, and thus not necessarily bound a smooth disk. Instead, the role of the Whitney disk is played by a Legendrian embedding $\psi:W\times[0,1]\lr(M,\xi)$ restricting to two isotropic embeddings $\psi_0:W\times\{0\}\lr S$, $\psi_1:W\times\{1\}\lr \DD$ such that $\psi_0(\dd W\times\{0\})=\Sigma=\psi_1(\dd W\times\{1\})$. The image of the Legendrian embedding $\psi$ is referred to as a Legendrian Whitney bridge, and its interior will lie on the complement $M\setminus(S\cup\DD)$. It is this Legendrian Whitney bridge that allows us to construct a compactly supported contact isotopy sliding $S$ along $W$ and remove the intersection $\Sigma\sse S\cap\DD$ without creating a new one.

\subsection{Construction of the Legendrian Whitney Bridge}\label{ssec:LegWhitneyBridge}

First, let us work under the hypothesis of Theorem \ref{thm:Legenvoid}, with the contact submanifold $(\DD,\xi)\sse(B,\xi)$ being a standardly embedded contact disk and $S\sse(B,\xi)$ a Legendrian sphere which is also standardly embedded. By genericity, we assume the intersection $\Sigma=\DD\cap S$ between $\DD$ and $S$ is smoothly transverse. Since $S$ is compact and $\DD$ is smoothly standard, there exists a $(n-1)$-dimensional compact manifold $W$ and an embedding $W \lr S$ with boundary $\partial W = \Sigma$. Indeed, the submanifold $W$ can be constructed by fixing a $2n$-dimensional smoothly standard disk $\hat \DD^{2n}$, satisfying $\DD \sse \hat \DD \sse B$, and intersecting the closure of one of the two connected components of $\hat{\DD}^{2n} \setminus\DD$ with the Legendrian sphere $S$.

Let $C_\Sigma=\Sigma\times(-\varepsilon,\varepsilon]$ be a collar neighborhood of the boundary $\Sigma\sse W$, and denote by $\W$ the quotient of $W\times[0,1]$ by the relation $(w_1,t)\sim (w_2,s)$ if and only if $w_1,w_2\in\dd W$, where $w_1,w_2\in W$, $s,t\in[0,1]$, i.e. the quotient $e:W\times[0,1]\lr\W$ collapses the subset $\dd W\times[0,1]$ to a single copy of the boundary $\dd W$.

\begin{proposition}[Legendrian Whitney Bridge]\label{lem:Whitney_trick}
	Let $(\DD^{2n-1},\xi_{st})\sse(B,\xi)$ be a properly embedded contact disk, $S\sse(B, \xi)$ a Legendrian submanifold, and both inclusions smoothly standard. For any sufficiently small $\varepsilon\in\R^+$, there exist a compact manifold $W$, a Legendrian embedding $F: \W \lr B$ and a smooth quotient map $e: W \times [0, 1] \lr \W$ such that:
	\begin{enumerate}
		\item[1.] $F$ is transverse to $S$ and $\DD$, $F^{-1}(S)= e(W \times \{ 0 \})$ and $F^{-1}(\DD) = e(W \times \{ 1 \})$,
		
		\item[2.] $(F \circ e)(\partial W \times [0, 1])= \Sigma = S \cap\DD$, $(F\circ e)(\partial W\times[0,1]) = \Sigma$ and $e|_{\mathring W \times [0, 1]}$ is a smooth embedding,
		
		\item[3.] There exists a Legendrian ribbon of $\Sigma\sse B$ diffeomorphic to $\Sigma \times \mathbb D^2(3\varepsilon) \subset B$ such that the restriction $(F \circ e)|_{C_\Sigma \times [0,1]}$ is mapped to  $\Sigma \times \mathbb D^2(3\varepsilon)$ and there exist coordinates $(q, s, t)\in C_\Sigma \times [0,1]$ such that
		$$(F\circ e)(q,s,t)=(q,(s\cos(t\pi/2), s \sin(t\pi/2))).$$\hfill$\Box$
	\end{enumerate}
\end{proposition}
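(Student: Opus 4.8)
The plan is to concentrate the construction near the isotropic intersection $\Sigma=S\cap\DD$, where every object can be written down explicitly, and to dispatch the rest of $W\times[0,1]$ by high-codimension general position in the spirit of Haefliger and Shapiro \cite{Ha,Sha}. \textbf{Set-up and normal form.} After a $C^\infty$-small, compactly supported contact isotopy one may assume $S\pitchfork\DD$, so that $\Sigma$ is a closed $(n-2)$-manifold; it is isotropic since $T\Sigma\subset TS\subset\xi$ and $TS$ is Lagrangian along the Legendrian $S$. Fix $W\subset S$ with $\dd W=\Sigma$ and a collar $C_\Sigma=\Sigma\times(-\e,\e]\subset W$ as in the discussion preceding the statement, and form $\W$, $e$. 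By the isotropic neighbourhood theorem, $\mathcal{O}p(\Sigma)\subset B$ is contactomorphic to a neighbourhood of the zero section in $J^1(\Sigma)\times\R^4$ with contact form $\alpha_0=dz-\lambda_\Sigma+\tfrac14\sum_{j=1}^{2}(u_j\,dv_j-v_j\,du_j)$, the $\R^4$-factor modelling the conformal symplectic normal bundle of $\Sigma$; inside it $TS/T\Sigma$ is a Lagrangian plane and $CSN_\DD(\Sigma)$ a symplectic plane, and they are transverse because $TS\cap T\DD=T\Sigma$. Using the Legendrian neighbourhood theorem for $S$, the contact neighbourhood theorem for $\DD$, and the fact that the smoothly standard embeddings trivialise all the relevant bundles over $\Sigma$, a parametric Moser argument then normalises both submanifolds near $\Sigma$: say $S=\{p=0,\ z=0,\ v_1=v_2=0\}$ and $\DD$ the model codimension-two contact submanifold with $CSN_\DD(\Sigma)=\mathrm{span}(\dd_{v_1}+\dd_{u_2},\ \dd_{v_2}-\dd_{u_1})$.

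\textbf{The rigid piece.} Let $P=\mathrm{span}(\dd_{u_1},\dd_{v_2})\subset\R^4$; this plane is Lagrangian, so $\alpha_0$ vanishes on $R:=\Sigma\times\DD^2(3\e)$ with $\DD^2(3\e)\subset P$, exhibiting $R$ as the desired Legendrian ribbon of $\Sigma$. Pick linear coordinates $(a,b)$ on $\DD^2(3\e)$ so that the $a$-axis is the direction in which $W$ leaves $\Sigma$ (contained in $S$) and the $b$-axis is a direction in which $\DD$ leaves $\Sigma$ (here $P\cap\DD$ is one-dimensional), and define $F\circ e$ on $C_\Sigma\times[0,1]$ by
\[
(q,s,t)\longmapsto\bigl(q,\,(s\cos(t\pi/2),\ s\sin(t\pi/2))\bigr)\in\Sigma\times\DD^2(3\e).
\]
The image lies in the Legendrian $R$ and hence is isotropic; it maps $C_\Sigma\times\{0\}$ into $S$ and $C_\Sigma\times\{1\}$ into $\DD$, collapses $\{s=0\}\times[0,1]$ onto $\Sigma$, is injective on $\{s\neq0\}$, and for $0<t<1$ misses $S\cup\DD$ since $(s\cos(t\pi/2),s\sin(t\pi/2))$ lies off both the $a$-axis and the line $P\cap\DD$. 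This yields item $(3)$, the part of $(2)$ over $C_\Sigma$, and the part of $(1)$ over $R$.

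\textbf{The global bridge.} It remains to extend $F$ over $e\bigl((W\setminus C_\Sigma)\times[0,1]\bigr)$, an $n$-manifold, hence of codimension $n+1\geq3$ in $B$, keeping it a Legendrian embedding with $F^{-1}(S)=e(W\times\{0\})$, $F^{-1}(\DD)=e(W\times\{1\})$, interior disjoint from $S\cup\DD$, and transverse to $S$ and $\DD$. On $W\times\{0\}$ one is forced to take the inclusion $W\inj S$, which is Legendrian and matches the collar data, and on $W\times\{1\}$ one takes a Legendrian filling $W'\inj(\DD,\xi_\DD)$ of $\Sigma$ with $W'\iso W$; such a $W'$ exists by the $h$-principle for isotropic immersions, since a generic Legendrian immersion of the $(n-1)$-manifold $W$ into the $(2n-1)$-manifold $\DD$ is automatically embedded. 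Choosing $\hat\DD$ to be a thin slab so that $W$ lies in a standard neighbourhood $\mathcal{N}(\DD)\iso\DD\times\DD^2$ of $\DD$, one can carry out the whole bridge inside $\mathcal{N}(\DD)$, where the Legendrian condition is governed by a concrete model and the interpolating family $W_t$, $0<t<1$, can be produced through $B\setminus(S\cup\DD)$ by a Haefliger--Shapiro type engulfing and general position argument, relative to the rigid piece and the prescribed boundary. General position and $C^0$-control near $C_\Sigma$ then furnish the stated preimages, transversality, and disjointness, so that $F$ and $e$ satisfy $(1)$--$(3)$.

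\textbf{Main obstacle.} The crux is the normal form step: fixing the relative position of the Lagrangian plane $TS/T\Sigma$ and the symplectic plane $CSN_\DD(\Sigma)$ in the rank-four conformal symplectic normal bundle of $\Sigma$, and selecting the Legendrian ribbon $P$ so that the explicit rotation starts on a collar of $\Sigma$ in $S$ and ends on a collar of $\Sigma$ in $\DD$. Absent this normalisation the rotation of the rigid step need not land on $\DD$ at $t=1$; securing it is precisely where the trivialisations provided by the smoothly standard hypotheses are fed into a parametric Moser argument, and where the dimensional restriction $n\geq2$ enters. The global bridge of the last paragraph, although lengthy, is by comparison routine high-codimension differential topology.
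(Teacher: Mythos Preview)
Your local construction of the rigid piece near $\Sigma$ is correct and close in spirit to the paper's, though you use explicit coordinates from the isotropic neighbourhood theorem where the paper works with framings $v_0,v_1,w_1$ and the short extension Lemma~\ref{lem:local_exten}; both routes yield the Legendrian ribbon $\Sigma\times\DD^2(3\e)$ and the rotation formula of item~(3).

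The global bridge paragraph, however, has genuine gaps. First, the sentence about choosing $\hat\DD$ to be a thin slab so that $W\subset\SN(\DD)$ is confused: if $\hat\DD$ is a thin $2n$-dimensional thickening of $\DD$ then $S\cap\hat\DD$ is an annular collar of $\Sigma$ in $S$, and the resulting $W$ acquires a second boundary component $\Sigma'\neq\Sigma$, contradicting $\dd W=\Sigma$. With the correct $W$ (essentially a hemisphere of $S$), there is no reason for it to lie near $\DD$, so the ``concrete model'' you invoke inside $\SN(\DD)\cong\DD\times\DD^2$ is simply not available, and the subsequent engulfing sketch has no footing.

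Second, and more seriously, you never construct the formal Legendrian data needed to invoke any $h$-principle on the interior of $W\times[0,1]$. The $h$-principle for Legendrian immersions takes as input an isotropic bundle monomorphism $G:T(W\times[0,1])\to\phi^*\xi$ extending the honest differential on the already-Legendrian boundary pieces; the phrase ``Haefliger--Shapiro type engulfing and general position'' does not supply this, and the same omission afflicts your construction of the Legendrian filling $W'\subset\DD$. The paper does precisely this missing work: it first builds a \emph{smooth} embedding $\phi:W\times[0,1]\to B$ via transversality (to avoid $S$ in the interior, using $n+n<2n+1$) and the Whitney embedding theorem, then splits $\phi^*\xi=V\oplus\C$ and uses the $(2n-2)$-connectedness of $\S^{2n-1}$ to extend a trivialising section from the boundary over the $n$-dimensional domain $W\times[0,1]$, obtaining $G$ with Lagrangian image in $V$; only then is the $C^0$-dense $h$-principle applied, relative to the boundary, with genericity giving embeddedness. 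This formal-data step is the technical heart of the argument and is absent from your sketch.
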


Property 3 in Proposition \ref{lem:Whitney_trick} is depicted schematically in Figure \ref{fig:corner}.

\begin{figure}[ht]
	\includegraphics[scale=0.45]{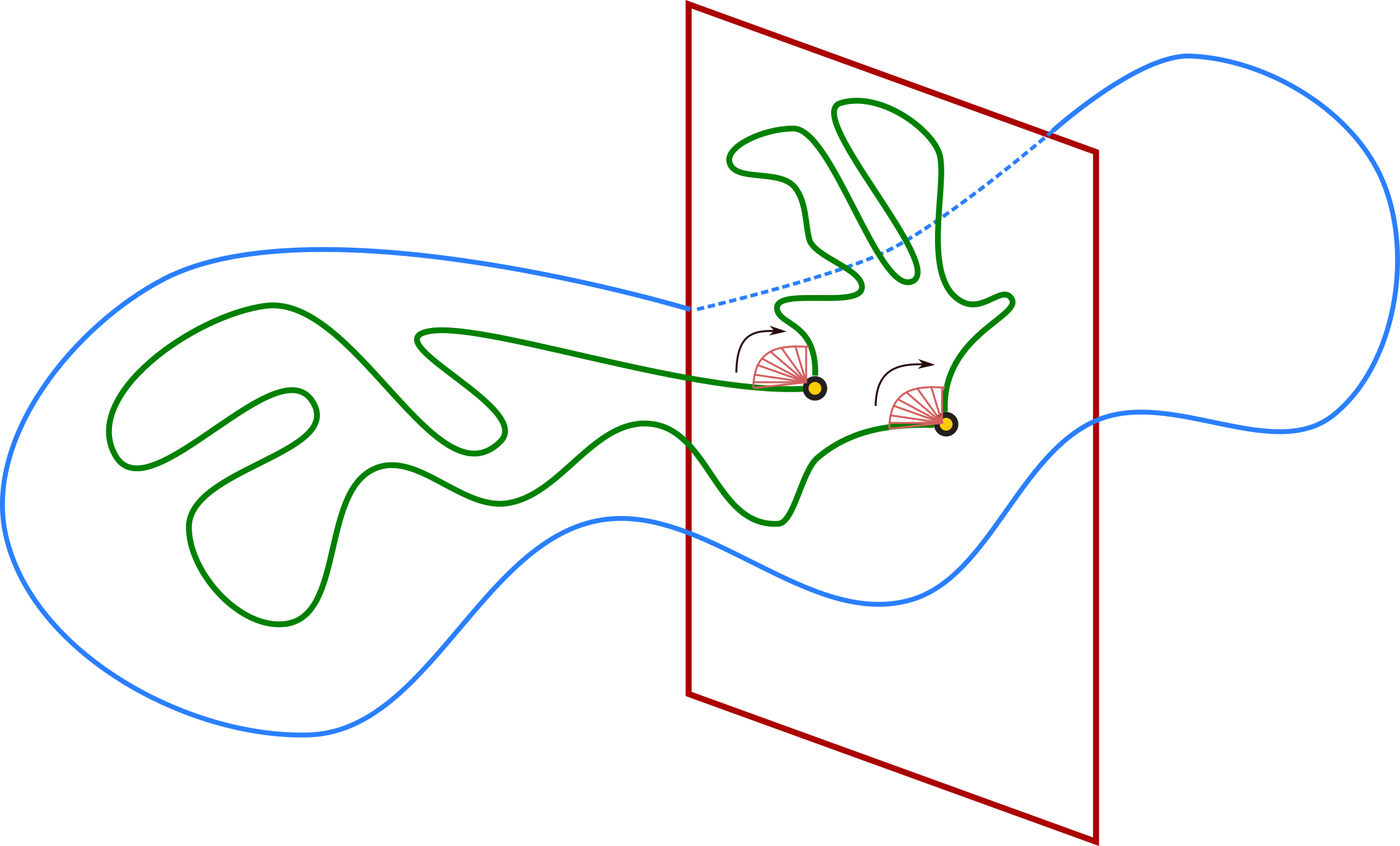} 
	\caption{The Legendrian neighborhood $\Sigma \times \mathbb D^2$ of the intersection $\Sigma=S\cap\DD$ and the image of the Legendrian Whitney bridge, where $W\times [0,1]$  is  mapped into the third quadrant.} \label{fig:corner}
\end{figure}

\begin{remark}
	The map $\psi = F \circ e:W\times[0,1]\lr B$  will be constructed as a $1$-parameter family $\{\psi_t\}_{t\in[0,1]}$ of isotropic embeddings of $W$, such that $\psi_0=(F\circ e)|_{W\times\{0\}}\sse S$ and $\psi_1=(F\circ e)|_{W\times\{1\}}\sse\DD$. Nevertheless, we first find a family of smooth embeddings $\{\phi_t\}_{t\in[0,1]}$ and then construct the required isotropic family $\{\psi_t\}_{t\in[0,1]}$. Hence, for the proof of Proposition \ref{lem:Whitney_trick},  $\phi_t$ are smooth morphisms, and $\psi_t$ are Legendrian morphisms.\hfill$\Box$
\end{remark}

Regarding notation, a normal bundle of a submanifold $A\sse B$ is denoted by $\nu_A^B$. In the case $\nu_A^B$ is an oriented real line bundle, $\nu_A^B$ will also denote a choice of no-where vanishing vector field trivializing the bundle.

\begin{subsection}{Proof of Proposition \ref{lem:Whitney_trick}} Let us declare $\phi_0 = \psi_0$ to be the inclusion $W\sse S$. Choose a compatible almost complex structure $J_{\xi}$ for the symplectic bundle $\xi=(\ker \alpha,d \alpha)$ which extends a complex structure compatible with $(\DD,\xi|_\DD)$. First, we proceed with the construction of the Legendrian embedding yielding $\W\sse B$ near the intersection locus $\Sigma$. For that, we choose framings as follows.
	
	Let us trivialize the rank-two (trivial) normal bundle $\nu_\Sigma^ S$ over $\Sigma$ such that
	$$\nu_\Sigma^S= \nu_\Sigma^{\phi_0(W)} \oplus \nu_{\phi_0(W)}^{ S}= \langle v_1 \rangle \oplus \langle w_1 \rangle,$$
	where $v_1,w_1$ are fixed but arbitrary non-vanishing sections. Since this rank-two bundle $\langle v_1, w_1 \rangle$ is transverse to $T\DD|_{\Sigma}$, there is a unique vector field $v_0$, up to positive scalar function, such that $\langle v_0 \rangle = \langle \{v_1, w_1, J_{\xi} w_1\} \rangle \cap T\mathbb D_{|\Sigma}$. In short, $v_0,v_1\in\Gamma(T\Sigma)$ are vector fields such that:
	\begin{itemize}
		\item[-] $v_0$ is tangent to $\DD$ and transverse to $\phi_0(W)$, $v_1$ is tangent to $\phi_0(W)$ and transverse to $\DD$,
		\item[-] The subbundle $T\Sigma \oplus \langle v_0, v_1 \rangle\sse \xi|_{\Sigma}$ is a Lagrangian space.
	\end{itemize}
	
	Indeed, the second property follows from the fact that $T\Sigma \oplus \langle v_1, w_1 \rangle$ is a Lagrangian subspace and we have replaced $w_1$ by a linear combination of the type $v_0= \lambda_0 w_1 + \lambda_1Jw_1 + \lambda_2 v_1$, and such a vector paired with the isotropic subspace $T\Sigma \oplus \langle v_1 \rangle $ evaluates to zero for the symplectic structure on $\xi_{\Sigma}$.
	
	For the smooth embedding $\phi_1$, we directly invoke the Whitney Embedding Theorem \cite{Whitney36,Whitney44}. This constructs a smooth embedding $\phi_1:W\lr\DD$ such that $\phi_1(\partial W)=\Sigma$, such that the vector field $v_0$ is tangent to $\phi_1(W)$ and $v_1$ is transverse to $\phi_1(W)$. In order to adequately extend the vector fields $v_0,v_1$ in a neighborhood of $\Sigma$, let us use the following:
	
	\begin{lemma}\label{lem:local_exten}
		Let $e:\Sigma\lr (M^{2n+1}, \xi)$ be an isotropic embedding, $k<n$, and $\{ \tau_{k+1}, \ldots, \tau_n\}$ a partial framing for $e^* \xi$ such that $T\Sigma \oplus \langle \{ e_* \tau_{k+1}, \ldots, e_* \tau_n \} \rangle \subset \xi_{e(S)}$ is a Lagrangian subspace. Then there exists a Legendrian embedding $\tilde{e}: \Sigma \times D^{n-k}(1) \lr (M, \xi)$ such that $\tilde{e}_{|\Sigma \times \{ 0 \}}=e$ and $(\tilde{e})_* \frac{\partial}{\partial x_i}|_{\Sigma \times \{ 0 \}}= \tau_{k+i}$, $1\leq i\leq n-k$.
	\end{lemma}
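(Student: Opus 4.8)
The plan is to reduce to an explicit normal form for a neighbourhood of the isotropic submanifold $e(\Sigma)$, in which the required Legendrian extension is manifest, using the partial framing $\{\tau_{k+1},\dots,\tau_n\}$ to trivialize the conformal symplectic normal bundle of $e(\Sigma)$ in a way that records the $\tau_i$. \emph{Step 1 (the Lagrangian hypothesis trivializes the conformal symplectic normal bundle).} Let $\nu$ be the conformal symplectic normal bundle of $e(\Sigma)$, i.e.\ the symplectic orthogonal $(T\Sigma)^{\perp}$ of $T\Sigma$ inside $(\xi|_{e(\Sigma)},d\alpha)$ modulo $T\Sigma$, a symplectic bundle of rank $2(n-k)$. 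Since $T\Sigma\oplus\langle e_*\tau_{k+1},\dots,e_*\tau_n\rangle$ is Lagrangian it is its own symplectic orthogonal, so each $e_*\tau_i$ lies in $(T\Sigma)^{\perp}$ and descends to a section $[\tau_i]$ of $\nu$; as the $\tau_i$ are independent modulo $T\Sigma$ and mutually $d\alpha$-orthogonal, the classes $[\tau_{k+1}],\dots,[\tau_n]$ frame a rank-$(n-k)$ isotropic, hence Lagrangian, subbundle $\Lambda\subset\nu$, which is therefore trivial, $\Lambda\iso\Sigma\x\R^{n-k}$. Choosing a Lagrangian complement of $\Lambda$ (for instance $J_\xi\Lambda$) yields a symplectic trivialization $\nu\iso\Lambda\oplus\Lambda^{*}\iso\Sigma\x\R^{2(n-k)}$ in which $\Lambda$ is the first factor and $[\tau_{k+i}]$ the $i$-th coordinate section.

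\emph{Step 2 (normal form and explicit extension).} By the isotropic neighbourhood theorem (Weinstein; in the contact case see \cite[Ch.~2]{GeigesCont}), applied with the trivialization of $\nu$ from Step~1, there is a contactomorphism $\Psi$ from a neighbourhood of $e(\Sigma)$ in $(M,\xi)$ onto a neighbourhood of the zero section of the model
$$\big(T^{*}\Sigma\x\R_{z}\x\R^{2(n-k)}_{x,y},\ \ker\big(dz-\lambda_{\mathrm{can}}-{\textstyle\sum_{i}y_i\,dx_i}\big)\big),$$
with $\lambda_{\mathrm{can}}$ the tautological $1$-form of $T^{*}\Sigma$, such that $\Psi\circ e=(\mathrm{id}_\Sigma,0)$ and $T\Psi$ carries $T\Sigma\oplus\Lambda$ onto $T\Sigma\oplus\langle\partial_{x_i}\rangle$, sending $e_*\tau_{k+i}$ to $\partial_{x_i}$ modulo $T\Sigma$. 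In the model, $\{p=0,\,z=0,\,y=0\}=\Sigma\x\R^{n-k}_{x}$ is visibly Legendrian, so, choosing a diffeomorphism $f$ from $D^{n-k}(1)$ onto a disk small enough to lie inside the chart with $df_0=\mathrm{id}$, the map $\tilde e(\sigma,x):=\Psi^{-1}(\sigma,0,0,f(x),0)$ is a Legendrian embedding of $\Sigma\x D^{n-k}(1)$ with $\tilde e|_{\Sigma\x\{0\}}=e$ and $\tilde e_*\partial_{x_i}|_{\Sigma\x\{0\}}=e_*\tau_{k+i}$ modulo $T\Sigma$. Finally, post-composing $\tilde e$ with a reparametrization of the source of the form $(\sigma,x)\mapsto(\theta_x(\sigma),x)$, with $\theta_0=\mathrm{id}$ and prescribed first-order variation in $x$, corrects the derivative from ``$e_*\tau_{k+i}$ modulo $T\Sigma$'' to $e_*\tau_{k+i}$ on the nose, without disturbing $\tilde e|_{\Sigma\x\{0\}}=e$ or the Legendrian condition.

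\emph{Main obstacle.} The substantive point is Step~1: the hypothesis ``$T\Sigma\oplus\langle\tau_i\rangle$ Lagrangian'' is precisely what is needed to trivialize the conformal symplectic normal bundle compatibly with a Lagrangian splitting, which is exactly the invariant controlling the contact germ of $e(\Sigma)$. Given that, the only care required is in Step~2: one must invoke the isotropic neighbourhood theorem in the form that realizes a prescribed isomorphism of conformal symplectic normal bundles (which means following that isomorphism through the Moser argument in its proof rather than quoting a black box), and then run the small shear correction to upgrade the framing from ``modulo $T\Sigma$'' to exact; the choice of $f$ disposes of the bookkeeping about the radius of the disk.
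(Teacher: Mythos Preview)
Your proof is correct and follows essentially the same route as the paper's: both reduce to the standard model $T^*\Sigma\times\C^{n-k}\times\R$ via a Moser/isotropic-neighbourhood argument, in which the Legendrian extension $\Sigma\times D^{n-k}$ is manifest. Your treatment is in fact more careful than the paper's---you explicitly isolate the role of the Lagrangian hypothesis in trivializing the conformal symplectic normal bundle, and you address the ``modulo $T\Sigma$'' ambiguity in the framing with the shear reparametrization, whereas the paper absorbs this into the choice of the initial smooth extension $\hat e$.
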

	
	Lemma \ref{lem:local_exten} is proven right after we conclude the present proof. Now, we apply the Lemma~\ref{lem:local_exten} to the inclusion $\Sigma=S\cap \DD$ and the partial framing $\{\tau_{n-1},\tau_n\}=\{v_0,v_1\}$, where the vector fields $v_0,v_1$ are extended preserving their containment in $\xi$ and the extension of $v_1$ being transverse to $\phi_1(W)$. This extension yields a family of maps $\{\psi_t\}=\{\phi_t\}$, $t\in[0,1]$, in the collar neighborhood $C_{\Sigma} \times [0,1]$ that conform to Condition 3 in the statement of Proposition \ref{lem:Whitney_trick}. Let us now work towards extending $\{\phi_t\}_{[0,1]}$ from the collar $C_{\Sigma} \times [0,1]$ to the entire $B$.
	
	First, let us claim that for $\varepsilon\in\R^+$ small enough, we can construct smooth push-offs of the two embeddings $\phi_0(W)$, $\phi_1(W)$ providing a family of embeddings $\{\phi_t\}$, $t\in[0,\varepsilon) \cup (1-\varepsilon, 1]$ which are each transverse to $\mathbb D$ and $S$, and intersect $\mathbb{D}$ and $\mathcal{S}$ at $\Sigma$. In order to prove this, it is enough to find two non-vanishing vector fields $\tau_0,\tau_1$ within $\xi$ which are defined over $\phi_0(W)$ and $\phi_1(W)$, respectively, are normal to them and extend $v_0$ and $v_1$, i.e. $(\tau_0)_{|\Sigma}=v_0$ and $(\tau_1)_{|\Sigma}=v_1$. For that, let $\D$ be the connected component of $\hat\DD \setminus D$ which defines $W$, and note that $\langle v_1 \rangle = \nu_\Sigma^{\phi_0(W) } \simeq (\nu_{\mathbb D}^{\Delta})|_{\Sigma}$. Then we choose $\tau_1$ along $\phi_1(W)$ such that $\langle \{ \tau_1 \} \rangle=\nu_{\mathbb D}^{\Delta}|_\{\phi_1(W)\}$. The vector field $\tau_0$ is constructed similarly. Indeed, the vector field $J_{\xi}\cdot w_1$ is defined along $\phi_0(W)$ and the vector field $v_0$ that is defined over the collar $C_\Sigma$. Since $\langle v_1, w_1 \rangle$ is transverse to $\mathbb D$, we have that $\lambda_1>0$ and thus the linear interpolation between $J_{\xi}\cdot w_1$ and $v_0$ is never zero and transverse to $\phi_0(W)$. Hence, we can choose a cut-off function $h:W \lr [0,1]$ compactly supported on the collar $C_{\Sigma}$, with $h_{|\Sigma}=1$ and declare $\tau_0= h v_0 +(1-h) J_{\xi} w_1$.

	In this stage of the argument, the Legendrian Whitney bridge $W\times[0,1]$, and its embedding $(F\circ e)$ are (smoothly) defined by $\phi$ in the region $A = W\times ([0,\varepsilon) \cup (1-\varepsilon, 1]) \cup C_{\Sigma} \times[0,1])$. Its extension can be constructed as follows: choose an arbitrary smooth map $\tilde \phi: W \times [0,1] \lr B$, which coincides with $\phi$ along $A$, and use Thom's Transversality Theorem \cite{EliashMisch} to obtain a $C^{\infty}$-small perturbation, relative to a neighborhood of $A$, which does not intersect $S$. This is possible since $\dim (W \times [0,1]) + \dim S = 2n < 2n+1 = \dim B$. Then, the Whitney Embedding Theorem applies to perturb the map $\tilde \phi$, relative to $A$, into a smooth embedding $\phi:W\times [0,1]\lr B$. The image of the restriction of this map $\phi$ to the open subset $\mathring W  \times (0,1)$ avoids both $S$ and $\DD$. At this stage of the proof, we have constructed a smooth Whitney bridge according to the statement of Proposition \ref{lem:Whitney_trick}. The remainder of the proof consists in deforming $\phi$ to a Legendrian embedding as required.
	
	The Legendrian embedding will be obtained via the $h$-principle on Legendrian immersions \cite[Section 16.1]{EliashMisch}, and thus it suffices to endow the smooth embedding $\phi:W\times[0,1]\lr B$ with the structure of a formal isotropic embedding $G:W \times [0,1] \lr \mbox{Mon}_{\R}(TW\oplus \R,\phi^*\xi)$. In fact, the bundle $\phi^* \xi$ decomposes as $V \oplus \C$ for a complex bundle $V$, satisfying that $\phi_0^* \xi= (\phi_0)_* TW \otimes_{\R} \C \oplus \langle \tau_0, J_{\xi} \tau_0 \rangle$ and $V|_{W \times \{ 1\}} \oplus \C$,  where $V_1 \simeq \xi|_{\mathbb{D}}$. Indeed, along the collar neighborhood  $C_\Sigma$, the complexification
	$(\phi_t)_* TW \otimes_{\R} \C$ is a choice for $V$, and the linear interpolation $(t \tau_1 + (1-t)\tau_0)_{|\Sigma}=t v_1 +(1-t) v_0$ is a section trivializing the rank-two symplectic orthogonal. Then obstruction theory \cite{HatcherBook} tells us that this trivial sub-bundle can be extended along $W\times[0,1]$: since $\xi$ is smoothly trivial, choose a smooth map $\hat{f}: W  \times [0,1] \lr \S^{2n-1}$ extending a fixed section on the boundary, which exists since $\dim W \times [0,1]=n$ and $\S^{2n-1}$ is $(2n-2)$--connected. In conclusion, we have the decomposition $\phi^* \xi= V \oplus \{ \hat{f}, J \hat{f} \}$ for the pull-back of the contact structure.
	
	Let us denote $V_t = V|_{\phi_t(W)}$ and 
	$G_t = G|_{W \times \{t\}}$, and let us require that the formal isotropic embedding $(\phi,G)$ satisfies the following properties:
	\begin{itemize}
		\item[(a)] $G_t(w)(\{0 \} \oplus \R)= \{ 0 \} \oplus \C \subset \xi_{(w,t)}$, for any $w\in W$,
		\item[(b)] $G(w,t) = d\phi(w,t)$, for $(w,t) \in (W \times [0, \varepsilon) \cup C_\Sigma  \times [0,1])$,
		\item[(c)] $G_t(w)(TW \oplus \{ 0 \}) \subset V_t$ and is a Lagrangian subspace of $V_t$, for any $w\in W$.
	\end{itemize}
	
	The first condition is determined by imposing $G(w,t)(\partial_t)= f(w,t)$, and the second condition merely formalizes that the embedding is already Legendrian, not just formally Legendrian, in certain regions. The third condition can be assumed since $W \times [0,1]$ deformation retracts to $(W \times [0, \varepsilon]) \cup (C_{\Sigma}\times [0,1])$. In conclusion, $(\phi, G)$ is a formal Legendrian immersion of $W\times [0,1]$ and $(\phi_1,G_1)$ is a formal Legendrian immersion on $\mathbb D$, since $V_1 \simeq \xi|_{\mathbb{D}}$. Now, the $C^0$--dense $h$--principle for Legendrian immersions and the genericity of the immersion yields a $C^0$--small perturbation of the pair $(\phi_1,G_1)$ into a new pair $(\psi_1,F_1'=d\psi_1)$. Then we push the smooth map $\psi_1$ along the flow of $\tau_1$ to define the maps $\psi_t$, for $t\in (1-\varepsilon,1]$, which are Legendrian embeddings. Finally, the same $C^0$--dense $h$--principle for Legendrians immersions relative to the domain for the pair $(\phi, G)$ produces the required Legendrian Whitney bridge. The embedding $F:\W\lr B$ defined in the statement is the map $\psi$ extended with the vector fields $\partial _s$ and $\partial_t$, defined in the two boundaries of  $W \times [0,1]$. This concludes the proof of Proposition \ref{lem:Whitney_trick}.\hfill$\Box$
\end{subsection}

\noindent {\bf Proof of Lemma \ref{lem:local_exten}}. Consider the contact structure $(T^*\Sigma \times \C^{n-k} \times \R,\xi_\st)$ considered as a $1$-jet space, and extend the map $e$ to an smooth embedding $\hat{e}: T^*\Sigma \times \C^{n-k} \times \R \lr \Op(\Sigma)$ such that $\hat{e}_* \xi_{std}$ coincides with $\xi$ over $e(\Sigma)$ and $\frac{\partial}{\partial x_i}$ is mapped to the corresponding framing vector. Since the associated conformal symplectic structures share a Lagrangian subspace, the linear interpolation between them is through conformal symplectic structures. By Moser's stability argument \cite{GeigesCont}, given that the contact structures coincide over $S$, there is a small neighborhood of $\Sigma$ that is contactomorphic to a small neighborhood of $\Sigma\sse T^*\Sigma \times \C^{n-k} \times \R$. Composing $\hat{e}$ with this contactomorphic yields the required Legendrian embedding.\hfill$\Box$

\subsection{The Legendrian Whitney Trick.}\label{ssec:LegWhitneyTrick} Let us prove Theorem \ref{thm:Legenvoid}. Apply Proposition \ref{lem:Whitney_trick} to construct a Legendrian Whitney bridge $(e,F,W)$ for the intersection $\Sigma=S\cap\DD$. Intuitively, the product structure $W\times[0,1]$ of the domain of the Legendrian Whitney bridge allows us to proceed in the same vein as in the smooth case \cite{Sha,Whitney44}. In the present case, the central caveat is that the isotopy that displaces the Legendrian sphere $S$ from the contact submanifold $\DD$ must also be a contact isotopy. The challenge thus relies in finding a compactly supported contact isotopy which pushes the Legendrian $S$ along the image of the Legendrian Whitney bridge $W\times[0,1]$ in such a manner that the last contactomorphism has removed the intersection $\Sigma$.

In the smooth case, this is achieved by integrating a compactly supported extension of the smooth vector field in the direction of $[0,1]$. The core difference between the smooth and the contact situations is the fact that smoothly cutting-off a contact vector field might not integrate to a contact isotopy. Hence, in a nutshell, the main contribution of this section is a careful analytical construction of a compactly supported contact vector field, equivalently a contact Hamiltonian, which produces a contact isotopy with the necessary properties. Let us introduce local coordinates in order to perform these computations.

\subsubsection{Coordinate System.} The image of the Legendrian Whitney Bridge $W\times[0,1]$ is the Legendrian submanifold $\W=(W\times[0,1])/\sim$ and thus an open neighborhood of the inclusion $\W\sse (B^{2n+1},\xi)$ can be identified with $J^1(\W,\xi_{st})$. Let $q_n\in[0,1]$ be the coordinate in the second factor of $W\times[0,1]$, and $q_{n-1}\in (-\varepsilon,\varepsilon)$ the local coordinate in $\Op(\dd W)$ defined by distance to $\dd W$. Thus, given the nature of the quotient $W\times[0,1]\lr\W$, the pair $(q_{n-1},q_n)$ can be understood as smooth polar coordinates in $\Op(\dd W)\sse\W$, parametrizing the 2-dimensional normal slice to $\dd W$, where $q_{n-1}$ is the radius and $q_n$ its associated angle.

The intersection $W_0=\W\cap S$ of the Whitney bridge with the Legendrian sphere is the image of $W\times\{0\}$, and the intersection $W_1=\W\cap\DD$ of the Whitney bridge with the contact submanifold is the image of $W\times\{1\}$. Far from $\Sigma$, i.e. $q_{n-1}\neq0$, the former can be given the local\footnote{The coordinates $(q_1,\ldots,q_{n-1})$ are only local coordinates in $W$.} equation $\{(q_1,\ldots,q_{n-1},q_n)\in \W: q_n=0\}$, whereas the latter is defined by $\{(q_1,\ldots,q_{n-1},q_n)\in \W: q_n=1\}$. Along $\Sigma$, given by the vanishing of $q_{n-1}$, the quotient $W\times[0,1]\lr\W$ forces $q_n$ to be ill-defined, and we interpret any restriction on $q_n$ being satisfied. Thus, the above equation can be understood as meaningful all along $\W$ and we have $W_0\cap W_1=\Sigma$. This establishes a coordinate system along $\W$, which we can now extend to its neighborhood $J^1(\W,\xi_{st})$. Indeed, the chosen extension is given by the trivial Legendrian inclusion $\W\sse J^1(\W,\xi_{st})\cong (T^*\W\times\R_\tau,\la_\W-d\tau)$ given by the zero section in $T^*\W$ at $\tau=0$.

In the model $T^*\W\times\R$, we write $(q,p)\in T^*\W$ for points in the cotangent bundle such that the canonical projection $\pi:T^*\W\lr\W$ is given by $\pi(q,p)=q$. The contact submanifold $\DD$ is cut-out by the equation
$$\DD=\{(q,p,\tau)\in T^*\W\times\R:q_n=1,p_n=0\},$$
where $p_n$ is the conjugate coordinate of $q_n$. The isotropy and dimension count for $\W,S$ and $\dd W$ forces their conjugate momenta to vanish and thus we can write $$S=\{(q,p,\tau)\in T^*\W\times\R:p_1=p_2=\ldots=p_{n-1}=0,\tau=q_n=0\}.$$
Again, these coordinates are understood in the quotient $\W$, and thus the intersection $\Sigma=S\cap\DD$ is non-empty, cut out by the equation $\Sigma=\{(q,p,\tau)\in T^*\W\times\R:p_1=p_2=\ldots=p_{n-1}=p_n=0,\tau=q_{n-1}=q_n=0\}$, since the vanishing of the radial coordinate $q_{n-1}$ allows for both equations $\{q_n=1\}$ and $\{q_n=0\}$ to be satisfied by the angular coordinate, as in the usual context of smooth polar coordinates. Finally, let $(x,y)\in D^2$ be Cartesian coordinates for the disk spanned by these polar coordinates $(q_{n-1},q_n)$, and denote their conjugate momenta by $(p_x,p_y)$. Note that, in these Cartesian coordinates the intersection of the contact submanifold $\DD$ with the Legendrian Whitney bridge reads
$$\DD \cap \SO p(\W) =\{(q,p,\tau)\in T^*\W\times\R:x=0,p_x=0\}.$$

\begin{figure}[ht]
	\includegraphics[scale=0.5]{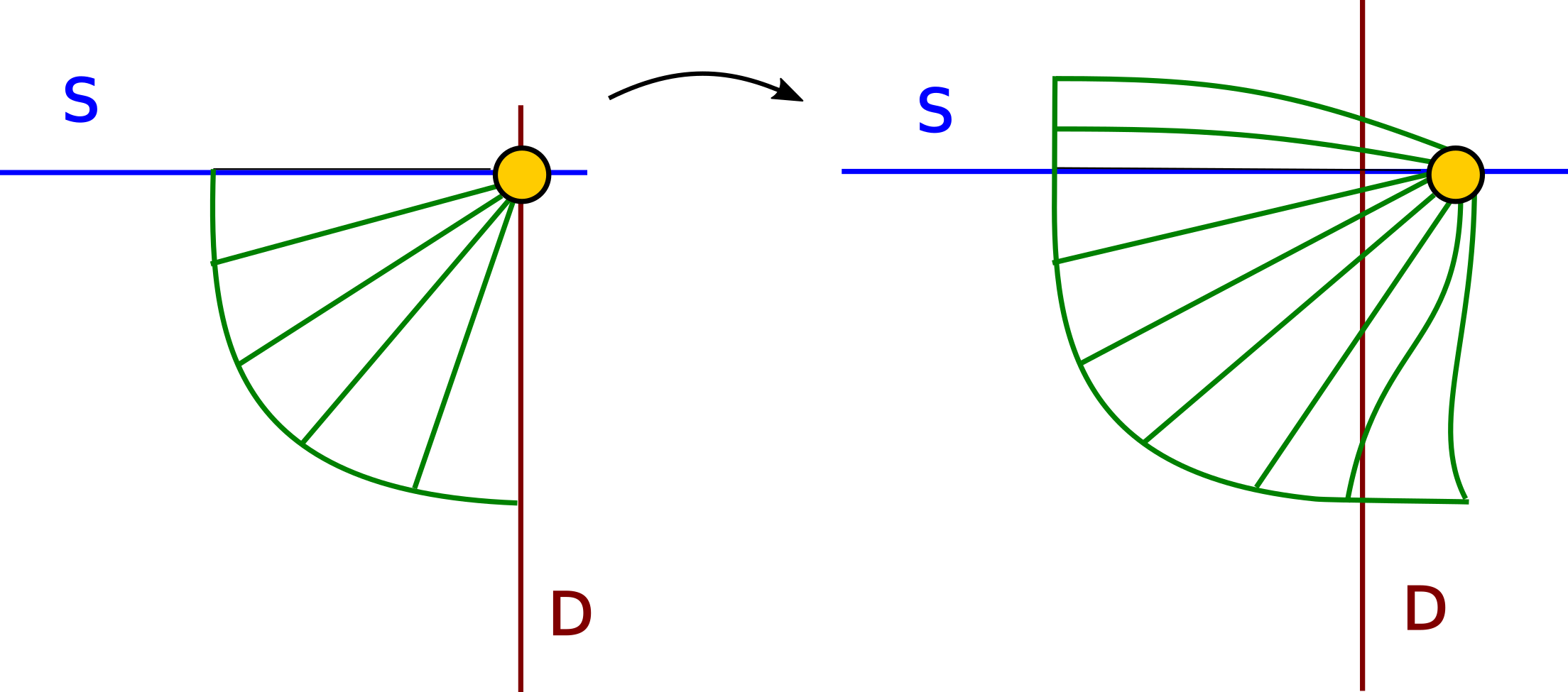}
	\caption{A depiction of $\Sigma \times \mathbb D^2\sse B$, and the modification of the Legendrian Whitney bridge $(e,F,W)$ to its extension $(\overline{e},F,W)$, which  places the extended collar $\overline{C}_\Sigma \times (-\varepsilon-1, 1+\varepsilon)$ of $\Sigma$ inside the domain $\Sigma \times \mathbb D^2$. In the picture, the angle is extended from $(0,1)$ on the right, to $(-\varepsilon-1, 1+\varepsilon)$ on the left.}   \label{fig:deformedparameter}
\end{figure}

\subsubsection{Preparation Before Sliding.} The Legendrian Whitney bridge $(e,F,W)$ provided by Proposition \ref{lem:Whitney_trick} can be intuitively depicted as an piece of a smooth open book \cite{GeigesCont} with binding $\Sigma$ and page $W$. The 2-disk factor of the tubular neighborhood $\Sigma\times D^2$ near the binding is parametrized by coordinates $(q_{n-1},q_n)$, or equivalently $(x,y)$. Before constructing the contact isotopy which removes the intersection $\Sigma=S\cap\DD$, let us perform a minor modification placing the binding of this open book off $\Sigma$, as depicted in Figure \ref{fig:deformedparameter}.

The need for this modification is already present in the smooth Whitney trick, as can be visualized in Figure \ref{fig:cornernice}. Indeed, the vector field $\dd_{q_n}$ in the above Whitney bridge $(e,F,W)$ vanishes along $\Sigma$, given that the smooth map $(F\circ e)$ only embeds the quotient $\W$. It is thus necessary to enlarge the collar $C_\Sigma$ of $\Sigma\sse \W$ in order for the extension of the vector field $\dd_{q_n}$ to displace $\D$ from $S$. This modification is stated in detail in the upcoming Lemma \ref{lem:nicecoor}.

The notation follows that of Proposition \ref{lem:Whitney_trick}, with $(s,t)$ denoted by $(q_{n-1},q_n)$ and $q\in\Sigma$ denoting a point in $\Sigma=\dd W$. Let us also prolong a collar neighborhood of the boundary $\Sigma=\dd W\sse W$ from $C_\Sigma=\Sigma \times (-\varepsilon, 0]$ to $\overline{C}_\Sigma=\Sigma \times (-\varepsilon,\varepsilon]$ and use it to define the extension $\overline{W}= W \cup \partial W \times (0, \varepsilon]$. In addition, fix any arbitrary smooth function $m: (-\varepsilon, -\varepsilon/2) \times (-1-\varepsilon, 0) \lr (-\varepsilon-1, 0)$, on the variables $(q_{n-1},q_n)$ such that:
\begin{itemize}
	\item[-] $m$ is not increasing on $q_{n-1}$,
	\item[-] For $-\varepsilon < q_{n-1} < -4\varepsilon/5$, $m(q_{n-1}, q_n)=1$, 
	\item[-] For $-3\varepsilon/5 < q_{n-1} < -\varepsilon/2$ and $q_n < -\varepsilon/10$, $m(q_{n-1}, q_n)=\varepsilon$, 
	\item[-] For $-3\varepsilon/5 < q_{n-1} < -\varepsilon/2$ and $q_n =0$, $\dd_{q_n}m=0$ and $m(q_{n-1}, q_n)=1$.
\end{itemize}

In the working hypothesis of Proposition \ref{lem:Whitney_trick}, we have the following:

\begin{lemma} \label{lem:nicecoor}
	There exists a smooth map $\overline{e}: \overline{W}  \times [-\varepsilon-1, 1+\varepsilon] \lr \W$ such that
	\begin{itemize}
		\item[(a)] The map $\overline{e}$ coincides with $e$ away from $\Op(\Sigma)\sse W$, i.e. on the domain $(\overline W \setminus \overline{C}_{\Sigma}) \times [0,1]$,
		
		\item[(b)] The  domain $\Sigma \times (-\varepsilon, \varepsilon] \times [-\varepsilon-1, 1+\varepsilon]$ is mapped by $\overline{e}$ into the domain $\Sigma \times \mathbb D^2(3 \varepsilon) \sse B$,
		
		\item[(c)] Let $\overline{\psi}=F \circ \overline{e}$, then $\overline{\psi} (p,q_{n-1},q_n)=(p, \overline{h}(q_{n-1},q_n))$ on $\Sigma \times (-\varepsilon, -\varepsilon/2) \times [-1-\varepsilon, 1 + \varepsilon]$, where the smooth map $\overline{h}$ is given by  
		
		\begin{eqnarray*}
			\overline{h}(q_{n-1},q_n) =(q_{n-1}, -q_n \cdot m(q_{n-1}, q_n)), &  (-\varepsilon, -\varepsilon /2) \times (0,1+\varepsilon),\, \,     \\
			\overline{h}(q_{n-1},q_n) = h(q_{n-1},q_n), &  (-\varepsilon, -\varepsilon /2) \times [0,1] ,\, \,   \\
			\overline{h}(q_{n-1},q_n) =(q_n-1, -q_{n-1}), &  (-\varepsilon, -\varepsilon /2) \times (-1-\varepsilon, 0),\, \, 
		\end{eqnarray*}
		
		\item[(d)]  $\overline{\psi}(q,q_{n-1},q_n)= (q, q_{n-1}+2\varepsilon,0)$ in the domain $\Sigma \times (\varepsilon/2, \varepsilon) \times [-\varepsilon-1, 1 + \varepsilon]$,
		
		\item[(e)] In the domain $\Sigma \times (-\varepsilon/2, \varepsilon /2) \times [-\varepsilon-1, 1 +\varepsilon)]$, we have that
		$$\overline{\psi}(p,q_{n-1},q_n)= (p, \overline x(q_{n-1},q_n), \overline y(q_{n-1},q_n)).$$
		
		where the functions
		$$\overline x:  (-\varepsilon/2, \varepsilon/2) \times [-\varepsilon-1, 1 +\varepsilon] \lr [-\varepsilon/2, 5\varepsilon/2],$$
		$$\overline y:  (-\varepsilon/2, \varepsilon/2) \times [-\varepsilon-1, 1 +\varepsilon] \lr [-\varepsilon/2, \varepsilon]$$
		satisfy the inequalities $\dd_{q_{n-1}}\overline x,\dd_{q_n}\overline{x}>0$ on the intersection $(\Sigma \times (-\varepsilon/2, \varepsilon/2) \times [-\varepsilon, 1 +\varepsilon]) \cap \mathbb D$ if the coordinate $\overline{x}$ itself vanishes,
		
		\item[(f)] $e(W \times \{ 0 \}) \subset \overline e (\overline W \times \{ 0 \})$ and $\overline e(\overline W \times \{ 0 \}) \cap \Sigma \times D^2(3\varepsilon)= \Sigma \times (-3\varepsilon, 3\varepsilon) \times \{ 0 \}$.
	\end{itemize}
\end{lemma}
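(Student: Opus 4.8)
The plan is to read Lemma~\ref{lem:nicecoor} as a local normal-form statement near $\Sigma$. Away from an arbitrarily small neighborhood of $\Sigma$ in $W$ we simply declare $\overline e=e$ on $(\overline W\setminus\overline C_\Sigma)\times[0,1]$, which already gives item~(a), so everything reduces to prescribing $\overline e$ on $\overline C_\Sigma\times[-\varepsilon-1,1+\varepsilon]$ with image in the Legendrian ribbon $\Sigma\times\DD^2(3\varepsilon)$ of Proposition~\ref{lem:Whitney_trick}, condition~3. Every formula asked of $\overline\psi=F\circ\overline e$ has the product shape $(p,q_{n-1},q_n)\mapsto(p,\overline h(q_{n-1},q_n))$ and $F$ is the fixed reference embedding, so it suffices to produce one smooth $\Sigma$-independent model map $\overline h$ of the two transverse coordinates $(q_{n-1},q_n)$ and then read off $\overline e$ from the composite $\overline\psi=F\circ\overline e$. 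Since $\overline e$ need only be smooth, not an embedding, this turns the lemma into the assembly of an explicit plane map.

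I would build $\overline h$ band by band in the radial coordinate $q_{n-1}$. On the outer band $q_{n-1}\in(-\varepsilon,-\varepsilon/2)$ take the piecewise formula of item~(c), built from the rotation normal form $h$ of Proposition~\ref{lem:Whitney_trick} on the original angular range $q_n\in[0,1]$ and from the affine/rescaled pieces governed by the non-increasing function $m$ on the two extended angular ranges $q_n>1$ and $q_n<0$; the four properties imposed on $m$ (equal to $1$ with vanishing $q_n$-derivative near $q_n=0$, equal to $\varepsilon$ for $q_n\ll 0$, not increasing in $q_{n-1}$) are precisely what is needed for these pieces to glue $C^\infty$ and to keep the image inside the ribbon. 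On the new band $q_{n-1}\in(\varepsilon/2,\varepsilon)$ take the flat shifted sheet $\overline h(q_{n-1},q_n)=(q_{n-1}+2\varepsilon,0)$ of item~(d), which is constant in the angle and lies on the far side of $\DD$. On the central band $q_{n-1}\in(-\varepsilon/2,\varepsilon/2)$, containing $\Sigma=\{q_{n-1}=0\}$, write $\overline h=(\overline x,\overline y)$ and define $\overline x,\overline y$ by interpolating, through cut-off functions in $q_{n-1}$, between the trace of the outer-band map at $q_{n-1}=-\varepsilon/2$ and the flat sheet at $q_{n-1}=\varepsilon/2$, choosing the interpolation so that $\overline x$ is radially increasing and angularly increasing along its own zero set. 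With $\overline h$ in hand, items~(b), (c), (d) hold by construction, item~(f) follows by reading off the $q_n=0$ traces of the three bands (all contained in $\{y=0\}$ and together sweeping out $\Sigma\times(-3\varepsilon,3\varepsilon)\times\{0\}$), and item~(a) is the already-chosen extension by $e$.

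The main obstacle is item~(e) together with smoothness across the ``reopened corner.'' The original bridge $e$ collapses the wedge $\{q_{n-1}=0\}\times[0,1]$ onto a single copy of $\Sigma$, so $\partial_{q_n}$ degenerates along $\Sigma$; unfolding that corner means that in the central band the angle $q_n$ now genuinely sweeps across the locus where the sheet $\overline\psi$ meets $\DD$, which is cut out by $\{x=0,p_x=0\}$, i.e. by $\{\overline x=0\}$ in the zero section. The sliding contact isotopy of Section~\ref{ssec:LegWhitneyTrick} flows along an extension of $\partial_{q_n}$, so it will cancel $\Sigma$ only if the sheet crosses $\{x=0\}$ transversally and monotonically in both the radial and the angular directions there; this is exactly the demand $\partial_{q_{n-1}}\overline x>0$ and $\partial_{q_n}\overline x>0$ on $\{\overline x=0\}\cap\DD$. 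Securing these two strict inequalities while simultaneously matching the rigid outer-band formula on one side, the flat sheet on the other, and staying within the ribbon is the one place where the choice of interpolating cut-offs is constrained rather than free. I expect to handle it by letting $\overline x$ inherit its monotonicity from the explicit pieces — in the outer band the $x$-components of each piece are non-decreasing in $q_n$, using the monotonicity built into $m$ — and by routing the cut-off so that this monotonicity is propagated, with no sign change, all the way to $q_{n-1}=\varepsilon/2$, where it is visibly compatible with the flat sheet $(q_{n-1}+2\varepsilon,0)$. The remaining verification of the pointwise ranges claimed in items~(b) and~(e) is then direct bookkeeping.
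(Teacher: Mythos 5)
Your proposal is correct and follows essentially the same route as the paper's (very terse) proof: both reduce the lemma to an explicit band-by-band construction of the plane map $\overline h$ near $\Sigma$, matching the rigid formulas of items~(c) and~(d) at the boundary bands and interpolating in between, and both single out item~(e) as the only nontrivial point, securing the strict inequalities $\partial_{q_{n-1}}\overline x,\partial_{q_n}\overline x>0$ on $\{\overline x=0\}\cap\DD$ by observing that the original bridge map satisfies the nonstrict versions globally and that the interpolation can be tuned to make them strict where needed.
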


\begin{proof}
	
	The items $(a)$-$(d)$ and $(g)$ are proven similarly as Proposition \ref{lem:Whitney_trick}. The main difference is the addition of the inequalities $\dd_{q_{n-1}}\overline x,\dd_{q_n}\overline{x}>0$ in item $(e)$. For that, just note that the condition
	$\frac{\partial  q_n}{\partial \overline x} >0$ is equivalent to 
	$\frac{\partial \overline x}{\partial  q_n} >0$. Since the initial morphism $e$ satisfies $\frac{\partial x}{\partial  q_{n-1}} \geq 0$ and $\frac{\partial x}{\partial  q_n} \geq 0$ globally, we can ensure that the extension $\overline{e}$ satisfies this property $(e)$.
\end{proof}

In order to ease notation, we shall refer to this extended Legendrian Whitney bridge still as $(e,F,W)$, understanding that the goal is to displace the original intersection $\Sigma$. However, the coordinates $(x,y)$ will be still referring to the previous embedding and this is why they satisfy Property $(e)$

\begin{figure}[h!]
	\includegraphics[scale=0.55]{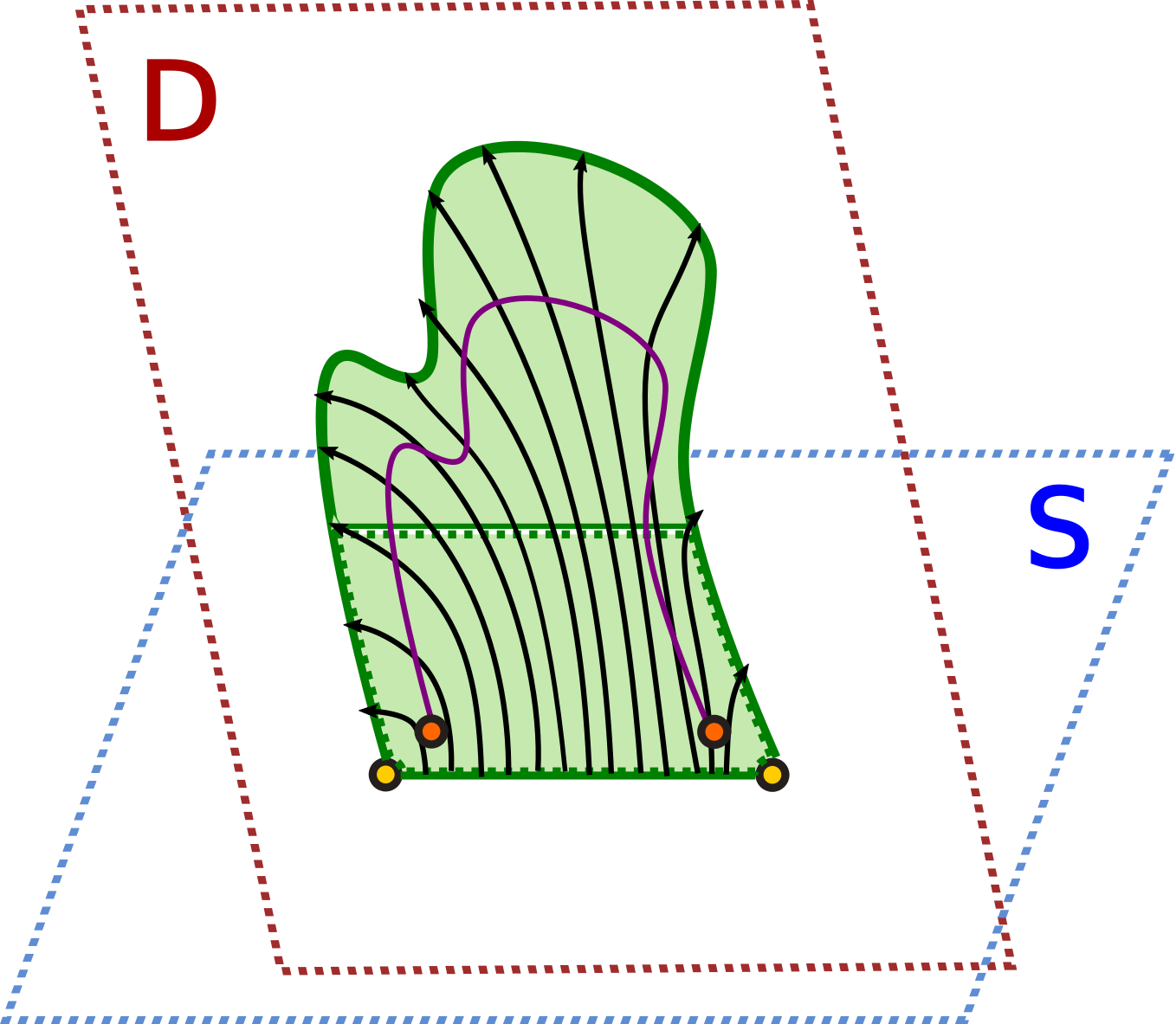}
	\caption{The Legendrian neighborhood $\Sigma \times \mathbb D^2$ of the modified intersection $\Sigma=S\cap\DD$ (Yellow) and the image of the extended Legendrian Whitney bridge (Green), where $W\times [0,1]$  is mapped into the third quadrant. The purple curve represents the original boundary of the Legendrian Whitney bridge $(e,F,W)$ and the flowlines of extended vector field $\dd_{q_n}$ (Black) displacing the original $\Sigma$ (Orange).}\label{fig:cornernice}
\end{figure}

\subsubsection{The contact isotopy} Endowed with the Legendrian Whitney bridge provided by Proposition \ref{lem:Whitney_trick}, and duly modified in Lemma \ref{lem:nicecoor}, we can consider the smooth isotopy given by the flow along the (quotient of the) coordinate $q_n\in[0,1]\sse \W$, generated by the vector field $\dd_{q_n}\in\Gamma(T\W)$, which is non-vanishing away from $\Sigma$, and adjusted to remove the intersection at $\Sigma$. This is the smooth generalization of the classical Whitney trick, as depicted in Figure \ref{fig:corner}. This smooth flow, defined along $\W$, lifts to a contact isotopy in the 1-jet bundle $J^1(\W,\xi_{st})$ \cite{GeigesCont}. Nevertheless, this is {\it not} a compactly supported contact isotopy.

The canonical lift of the vector field $\overline{e}_*(\dd_{q_n})$ has contact flow $ (j^1 \phi)^t(w, q_n, p_w, p_n, \tau)= (w, q_n+t, p_w, p_n, \tau)$, where $w\in W$ belongs to the Legendrian Whitney bridge. Note that this contact flow displaces the Legendrian $S$ from the contact submanifold $\DD$ at $t\geq 1+\varepsilon$. Indeed, the contact flow $(j^1 \phi)^t$ preserves the $p_n$ coordinate and we have the following:

\begin{lemma} \label{lem:partialpush}
Let $z\in \Sigma \times D^2(3\varepsilon)$ be such that the coordinate $p_n(z) \neq 0$ is non-vanishing, $p_{n-1}(z)=0$ and $\overline x(z)=0$. Then $p_x(z) \neq 0$ and thus $z\not \in \mathbb D$.
\end{lemma}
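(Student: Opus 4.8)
The claim is an infinitesimal statement about the standard $1$-jet coordinates along the Legendrian Whitney bridge, so the plan is to reduce it to the contact condition in $J^1(\W,\xi_{st})$ together with the positivity inequality $\dd_{q_n}\overline x>0$ recorded in item (e) of Lemma~\ref{lem:nicecoor}. First I would work entirely in the model $(T^*\W\times\R_\tau,\la_\W-d\tau)$ near $\W\sse B$, in the Cartesian coordinates $(x,y)$ on the $2$-disk transverse to $\dd W$, with conjugate momenta $(p_x,p_y)$; these are coordinates for the \emph{original} (unextended) bridge, which is the point of the final sentence of the paragraph preceding Lemma~\ref{lem:partialpush}. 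In these coordinates $\DD\cap\SO p(\W)=\{x=0,\ p_x=0\}$, so $z\notin\DD$ will follow as soon as $p_x(z)\neq0$ once we know $\overline x(z)=0$ (equivalently $x(z)=0$, up to the affine reparametrization in Lemma~\ref{lem:nicecoor}(e)).

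Second, I would translate between the two coordinate systems on the normal $2$-disk: the "polar" pair $(q_{n-1},q_n)$ coming from the quotient $W\times[0,1]\to\W$ and the Cartesian pair $(x,y)$. The momenta transform contravariantly, so $p_n = \frac{\partial x}{\partial q_n}p_x + \frac{\partial y}{\partial q_n}p_y$ and $p_{n-1}=\frac{\partial x}{\partial q_{n-1}}p_x+\frac{\partial y}{\partial q_{n-1}}p_y$ (with the $q$-coordinates along $\Sigma$ contributing nothing since those momenta vanish on $\W$ and hence, being preserved by the canonical lift, on the relevant locus). Now impose the hypotheses of the Lemma at the point $z$: $\overline x(z)=0$, $p_{n-1}(z)=0$, $p_n(z)\neq0$. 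From $\overline x(z)=0$, Lemma~\ref{lem:nicecoor}(e) gives $\dd_{q_{n-1}}\overline x>0$ and $\dd_{q_n}\overline x>0$ at $z$ (and similarly one controls the sign pattern of $\dd y/\dd q_{n-1}$, $\dd y/\dd q_n$ from Property~3 of Proposition~\ref{lem:Whitney_trick}, where $(F\circ e)(q,s,t)=(q,(s\cos(t\pi/2),s\sin(t\pi/2)))$, so the Jacobian of $(x,y)$ in $(q_{n-1},q_n)$ is explicit). Feeding these into the two displayed expressions for $p_{n-1}$ and $p_n$ yields a $2\times2$ linear system for $(p_x,p_y)$ whose coefficient matrix is invertible by the strict positivity, and whose inhomogeneous data is $(p_{n-1},p_n)=(0,p_n)$ with $p_n\neq0$; hence $(p_x,p_y)\neq(0,0)$, and in fact $p_x\neq0$ because a solution with $p_x=0$ would force $p_n=\frac{\partial y}{\partial q_n}p_y$ and $0=p_{n-1}=\frac{\partial y}{\partial q_{n-1}}p_y$, which (again using the explicit Jacobian from Property~3, where $\dd y/\dd q_{n-1}\neq0$ off the corner) forces $p_y=0$ and thus $p_n=0$, a contradiction. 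Therefore $p_x(z)\neq0$, and since $x(z)=0$ but $\DD=\{x=0,p_x=0\}$ locally, we conclude $z\notin\DD$.

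**Main obstacle.**
The routine part is the linear algebra; the delicate point is making sure the Jacobian of the change of coordinates $(q_{n-1},q_n)\mapsto(x,y)$ is the \emph{right} one — i.e. that at a point $z$ with $\overline x(z)=0$ we are genuinely away from the binding corner where $\dd_{q_n}$ degenerates, so that the strict inequalities of Lemma~\ref{lem:nicecoor}(e) and the nonvanishing of the relevant partials of $y$ (from Property~3 of Proposition~\ref{lem:Whitney_trick}) actually apply. The containment $z\in\Sigma\times D^2(3\varepsilon)$ together with $\overline x(z)=0$ should pin $z$ into the region $\Sigma\times(-\varepsilon/2,\varepsilon/2)\times[-\varepsilon,1+\varepsilon]$ of item (e), which is exactly where those inequalities are asserted; spelling out why the other regions of Lemma~\ref{lem:nicecoor} (items (c),(d),(f)) are excluded by $\overline x(z)=0$ and $p_n(z)\neq0$ is the one spot that needs care rather than computation.
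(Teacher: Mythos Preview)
Your approach is the same idea as the paper's --- use the invariance of the Liouville form under the change of base coordinates $(q_{n-1},q_n)\leftrightarrow(\overline x,\overline y)$, together with the positivity from Lemma~\ref{lem:nicecoor}(e) --- but you take the long way around. The paper writes the transformation in the direction $p_x d\overline x + p_y d\overline y = p_{n-1}\,dq_{n-1} + p_n\,dq_n$, sets $p_{n-1}=0$, expands $dq_n$ in $(d\overline x,d\overline y)$, and reads off $p_x = p_n\,\dd q_n/\dd\overline x$ in one line; the positivity $\dd q_n/\dd\overline x>0$ is exactly what is arranged in (the proof of) Lemma~\ref{lem:nicecoor}.

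You instead write the opposite transformation $(p_{n-1},p_n)$ in terms of $(p_x,p_y)$ and then argue by contradiction, which forces you to control $\dd_{q_{n-1}}\overline y$ as well. That extra input is where your argument is shakier: you try to extract $\dd_{q_{n-1}}\overline y\neq 0$ from the explicit polar formula in Property~3 of Proposition~\ref{lem:Whitney_trick}, but that formula lives on the \emph{original} collar $C_\Sigma=\Sigma\times(-\varepsilon,0]$, whereas the relevant points ($\overline x=0$ in the region of item~(e)) sit in $q_{n-1}\in(-\varepsilon/2,\varepsilon/2)$, precisely where the map has been modified and no closed formula is recorded. Lemma~\ref{lem:nicecoor}(e) only constrains the partials of $\overline x$, not of $\overline y$, so as written your contradiction step is not fully justified. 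The fix is simply to reverse the direction of your momentum computation and obtain $p_x=p_n\,\dd q_n/\dd\overline x$ directly, matching the paper's argument; your ``main obstacle'' about the region bookkeeping then evaporates, since only the single inequality $\dd q_n/\dd\overline x>0$ (equivalently $\dd_{q_n}\overline x>0$, as asserted in the proof of Lemma~\ref{lem:nicecoor}) is needed.
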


\begin{proof}
Let us verify $p_n \neq 0$ implies $p_x\neq 0$. Indeed, being a change of coordinates we have
	$$p_n dq_n+ p_{n-1}dq_{n-1}= p_x d\overline x + p_y d\overline y,$$
	and, by the hypothesis $p_{n-1}=0$, we have that
	$$ p_n dq_n + 0 dq_{n-1}= p_n dq_n =p_n \frac{\partial  q_n}{\partial \overline x} dx + p_n \frac{\partial q_n} {\partial \overline y} dy. $$
	Thus we have $p_x= p_n \frac{\partial  q_n}{\partial \overline x}$ and, by Lemma \ref{lem:nicecoor}, we have $\frac{\partial  q_n}{\partial \overline x}>0$. 
\end{proof}

Lemma \ref{lem:partialpush} shows that 
$$p_n((j^1 \phi)^t(w,0, 0, p_n,0))\neq 0 \Longrightarrow p_x((j^1 \phi)^t(w,0, 0, p_n,0))\neq 0.$$ 
This implies that any point of $z\equiv (w,0,0,p_n,0) \in S \setminus W_0$ satisfies that $(j^1 \phi)^t(w,0, 0, p_n,0)$ does not belong to $\DD$ for any $t>0$. This shows that the only intersection of the displaced Legendrian spheres is happening along the image of $W_0$ through the sliding flow. These intersections, in turn, disappear at time $t=1+ \varepsilon$. In addition, observe that this property remains true after applying the cut--offs which achieve the compact supported for the contact isotopy. Since the displacement occurs in the $q_n$ directions and the Legendrian Whitney bridge essentially moves along the $(q_{n-1},q_n)$, the core work for achieving a compactly supported contact isotopy must be in cutting-off along the conjugate coordinates $(p_{n-1},p_n)$ and the Reeb direction $\tau$. Let us start constructing this cut-off.

Let us start in a neighborhood of $W_0=S\cap\W$, which is one of the two ends of the Whitney bridge. Let $c\in\R^+$ be a positive real number $c$ such that $0< c <\varepsilon/2$, and construct a compactly supported cut-off function $\widetilde G: (-3\varepsilon, 3\varepsilon) \lr [0,1]$ satisfying that
\begin{itemize}
	\item[-] $\widetilde G(0)=1$, $\widetilde G'(0)=0$, and $\widetilde G'$ is odd,
	\item[-] For all $p_n\in (-3\varepsilon, 3\varepsilon)$, the associated integral $$\widetilde g(p_n)= \int_0^{p_n}  \widetilde{G}'(s)\cdot s \cdot ds$$
	satisfies $|\widetilde g|_{C_0} \leq 3c$.
\end{itemize}

\begin{figure}[ht]
	\includegraphics[scale=0.3]{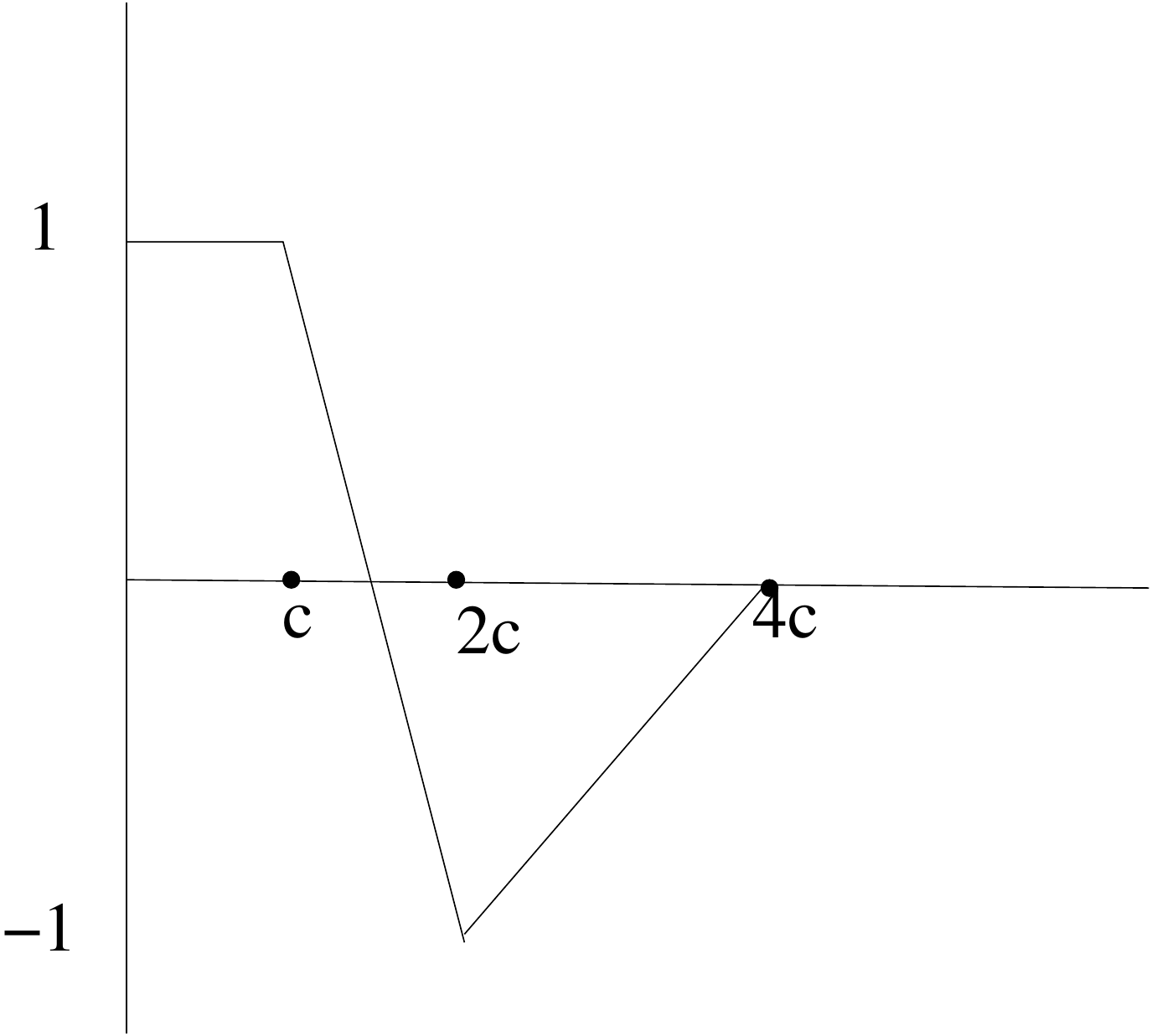}
	\caption{The figure depicts the linearized version $\widetilde{G}$ of the function $G$ }\label{fig:function_G}
\end{figure}

In order to construct such $\widetilde{G}$, we can proceed as follows. Consider the function
\begin{equation*}
\widetilde{G} = \left\{ \begin{array}{cc} 1 & p_n\in [0,c]  \\ 1-2\frac{p_n-c}{c} & p_n\in [c,2c] \\ -1 + \frac{p_n-2c}{2c} & p_n\in [2c,4c]  \\ 0 & p_n \geq 4c \end{array},\right.
\end{equation*}
which we extend to negative values by imposing even symmetry. Then $\widetilde{G}$ satisfies
\begin{equation}\int_0^{4c} \widetilde{G}'(p_n) \cdot p_n dp_n =0, \label{eq:zeroarea}
\end{equation}
Since $\widetilde{G}'$ is an odd function,  $(\widetilde{G}'(p_n) )\cdot p_n$ is even and thus the function $\widetilde{g}: (-3\varepsilon, 3\varepsilon) \lr \R$, defined as
$$ \widetilde{g}(p_n) = \int_{0}^{p_n} \widetilde{G}'(s)\cdot s \cdot ds, $$ 
is an odd function which is also compactly supported in $[-4c,4c]$ after Equation (\ref{eq:zeroarea}). Since $|\widetilde g|_{C_0} \leq 3c$, the above $\widetilde{G}$ satisfies the second property. Finally, smooth out the function $\widetilde{G}$ to a new $C^{\infty}$ function $G$, which is $C^0$-close to it, still has support in the closed interval $[-4c,4c]$ and satisfies the required properties. In particular, the associated function
\begin{equation*}
g(p_n) = \int_{-3\varepsilon}^{p_n} G'(s)s ds, \label{eq:g}
\end{equation*}
has compact support in the interval $[-4c,4c]$, and satisfies $g'(p_n)=G'(p_n)\cdot p_n$ and $|g|_{C^0} \leq 3c$. Note that $G$ can be chosen such that the norm $|g|_{C^0}$ is arbitrary small, by choosing $c$ as small as required.

Now, the flow $\Psi_t(w, q_n, p_w, p_n, \tau)=( w, q_n+tG(p_n), p_w, p_n, \tau- tg(p_n))$ is a strict contact flow defined for $t<1+\varepsilon$ on a neighborhood of the intersection $ S \cap\W$. It is here assumed that the function $g$ is $C^0$-small to assure that the map $\tau \lr \tau- tg(p_n)$ is well-defined, since the $\tau$-coordinate is defined for a small open neighborhood of $\tau=0$. In the same vein, the inequality $|G|_{C^0} \leq 1$ implies that the function $q_n+tG(p_n)$ lies in the interval $(-\varepsilon-1, 1 +\varepsilon)$.

In addition, this cut-off flow is the identity for $p_n> 3\varepsilon$ and therefore it glues $\Psi_t(S)$ and $S$ in the transition region $\Op(\{ p_n = \pm 3 \varepsilon \})$. Thus, this discussion above resolves the contact cut-off regarding the direction associated to the $1$-jet of the coordinate $q_n$. Let us now study the remaining region $q_{n-1}> \varepsilon$, responsible for the {\it horizontal} directions in the Whitney trick, as depicted in Figure \ref{fig:corner}.

The  contact vector field associated to the above constructed flow is given by $X_H= G(p_n)\frac{\partial}{\partial q_n} -g(p_n) \frac{\partial}{\partial \tau}$, whose associated Hamiltonian function is $H= G(p_n) \cdot p_n -g(p_n)$. Choose a real number  $\delta \in (-\varepsilon, \varepsilon)$ such that for any point $z \in\Op(\W)$ with coordinate $q_{n-1}(z) \geq \delta$, we have $x(z) >0$. Fix a smooth step function $R: (-\infty, \varepsilon) \lr [0,1] $ such that
\begin{itemize}
	\item[-] $R(q_{n-1})= 1$ for $q_{n-1} \leq \frac{\varepsilon+\delta}{2}$,
	\item[-] $R(q_{n-1})= 0$ for $q_{n-1} \geq \frac{3\varepsilon+\delta}{4}$,
	\item[-] $0 \geq \dd_{q_{n-1}}R \geq -\frac{5}{\varepsilon-\delta}$.
\end{itemize}
and define $\hat R =R \cdot H$, where $H$ is the Hamiltonian above. Then the contact vector field associated to the Hamiltonian $\hat R$ is given by the formula $X_{\hat H}= R \cdot X_H - (\dd_{q_{n-1}}R) \cdot H\cdot \dd_{p_{n-1}}$. Thus the associated contact flow $\hat{\Psi}_t$ coincides with $\Psi_t$ away from $\mathcal O p(\Sigma \times D^2(3\varepsilon))$. In the neighborhood $\mathcal O p(\Sigma \times D^2(3\varepsilon))$ we obtain
$$ \hat{\Psi}_t(q, q_{n-1}, q_n, p, p_{n-1}, p_n, \tau)=( q, q_{n-1}, q_n+tG(p_n)R(q_{n-1}), p, p_{n-1}+ t\cdot \dd_{q_{n-1}}R \cdot H, p_n, \tau- tg(p_n)R(q_{n-1})).$$
Since the upper bound $\|\dd_{q_{n-1}}R \cdot H\| \leq \frac{5}{\varepsilon-\delta} \cdot (c+ 2c)= \frac{15c}{\varepsilon-\delta}$ can be made arbitrary small by further reducing $c>0$, the flow of $\hat{\Psi}_t$ is well-defined for time $t<1+ \varepsilon$. In consequence, we obtain a $1$-parametric family of Legendrian submanifolds obtained by flowing the Legendrian $S$ along the contact isotopy $\hat{\Psi}_t$, for $t\in [0,1+\varepsilon)$. In addition, the Legendrian $S_{1+\varepsilon}$ is disjoint from the contact submanifold $\mathbb D$. Indeed, in the region $S_{1+\varepsilon} \cap (B \setminus \Op(\W))$, the family of Legendrians $S_t$ remains constant and thus the empty intersection with $\mathbb D$ persists. In the region $S\cap\Op(W)$ near the Legendrian Whitney bridge, the coordinate $p_n$  is preserved along the contact flow. It thus suffices to show that the Legendrian $S_t$ can only intersect $\mathbb D$ in points of the image of the domain $W_0= S \cap \{ p_n=0 \}$ through the flow. This can be discussed in the following two regions:

\begin{itemize}
	\item[(a)] The complement of the region $\Sigma \times \DD(3\varepsilon)$. Then $p_x=p_n$ for $x=0$ and thus since $p_n$ is preserved, we obtain that the only possible intersections between $\DD$ and $\hat{\Psi}_t(S)$ happen in the image of $W_0$. The same argument works for the region in $\Sigma \times \DD(3\varepsilon)$ in the complement of the support of $R$.
	
	\item[(b)] In the remaining region, where the coordinate $p_{n-1}$ is not preserved, we cannot use Lemma \ref{lem:partialpush}. Nevertheless, the inequality $x>0$ is satisfied and preserved by the contact flow in positive time. Thus, since $\DD$ is defined by $\{ x=p_x=0 \}$, the image of $S$ can only intersect $\DD$ along the image of $W_0$ through the flow.
\end{itemize}

In consequence, since the coordinate $p_n$ is preserved by the flow, the contact flow coincides with the displacement flow constructed along $\W$, and thus at time $1+\varepsilon$ the intersection between $S_{1+\varepsilon}$ and $\DD$ must be empty.\hfill $\Box$


\section{Application to Isocontact embeddings} \label{sec:main}

In this section we establish Theorem~\ref{thm:main} by using Theorem \ref{thm:Legenvoid}. In line with standard h-principle arguments \cite{CPPP,CP,EliashMisch,Gr86}, the isocontact h-principle Theorem \ref{thm:main} follows from the resolution of the local problem, which is the content of the following:

\begin{theorem}\label{thm:iso-contact_disk_embeddding_rel_boundary}
	Let $(B^{2n+1}, \xi)$ be a contact $(2n+1)$--dimensional disk, $n\geq2$, and $(\phi, F_s):(\mathbb D^{2n-1}, \xi_{std}) \lr(B^{2n+1}, \xi)$
	a proper formal isocontact embedding such that:
	
	\begin{enumerate}
		\item The embedding is isocontact close to the boundary, i.e $(F_s)_{|\SO p(\mathbb D^{2n+1})}= d\phi_{|\SO p(\mathbb D^{2n+1})}$,
		\item The embedding is smoothly isotopic to the standard embedding.
	\end{enumerate}
	Then, there exists a family of formally isocontact 
	embeddings $(\phi^t, F_s^t)_{t\in [0,1]}$, relative to the boundary, such that the deformed embedding is isocontact; i.e. $F_s^1=d\phi^1$.\hfill$\Box$
\end{theorem}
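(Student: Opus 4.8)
\emph{Proposal.} The plan is to reduce the codimension-two problem to a Legendrian one, solve that with the $h$--principle for Legendrian embeddings, and then invoke the Legendrian Whitney trick (Theorem \ref{thm:Legenvoid}) to correct the smooth isotopy type of the resulting contact disk, which is the only datum not controlled by the $h$--principle. All constructions are carried out rel a neighbourhood of $\partial\mathbb D^{2n-1}$, where $(\phi,F_s)$ is already isocontact, and every smooth isotopy that appears is understood to be accompanied by the transported formal data, so that it is itself an admissible path of formal isocontact embeddings. \textbf{Step 1 (collapse to an isotropic core).} Using $\mathbb D^{2n-1}\iso\mathbb D^{n-1}\times\mathbb D^{n}$ and hypothesis (2), after a smooth isotopy we may assume $\im(\phi)$ is an arbitrarily thin tubular neighbourhood of the smooth $(n-1)$--disk $L_0=\phi(\mathbb D^{n-1}\times\{0\})$. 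The restriction of $F_s$ to $L_0$, together with the splitting of $\phi^*\xi$ along it, defines a formal Legendrian embedding of $\mathbb D^{n-1}$ in $(B,\xi)$; since $\mathbb D^{n-1}$ is contractible and the data is fixed and isocontact near $\partial$, obstruction theory \cite{HatcherBook} shows that passing to the core is lossless: a formal isocontact neighbourhood of a formal Legendrian $\mathbb D^{n-1}$ is, up to homotopy rel boundary, the same as a formal isocontact embedding of $(\mathbb D^{2n-1},\xi_{st})$ in the prescribed class.

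\textbf{Step 2 (solve the Legendrian problem).} By the $C^0$--dense $h$--principle for Legendrian immersions \cite[Section 16.1]{EliashMisch}, the formal Legendrian $\mathbb D^{n-1}$ of Step 1 is homotopic rel $\partial$, through formal Legendrian embeddings, to a genuine Legendrian immersion $L$; a generic such $L$ is embedded, since $2(n-1)<2n+1$. Lemma \ref{lem:local_exten} equips $L$ with a standard Legendrian neighbourhood, that is, a genuine isocontact embedding $\psi\colon(\mathbb D^{2n-1},\xi_{st})\lr(B,\xi)$, and by Step 1 the pair $(\psi,d\psi)$ is connected to $(\phi,F_s)$ through formal isocontact embeddings \emph{at the level of formal data}. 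The point is that $\psi$ need not be smoothly isotopic rel $\partial$ to $\phi$: the $h$--principle controls $L$ only up to formal Legendrian homotopy, and a codimension-two disk is smoothly knotted in general, so $\psi$ may differ from $\phi$ by a nontrivial smooth knotting concentrated in a Darboux ball.

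\textbf{Step 3 (correct the smooth type; assemble the family).} The discrepancy between $\psi$ and the smoothly standard disk $\phi$ is detected by the intersection of $\im(\psi)$ with a Legendrian sphere $\S^{n}$ linking the core (the ``cocore'' Legendrian sphere associated with the $\mathbb D^{n}$-factor in $\mathbb D^{n-1}\times\mathbb D^{n}$), in the spirit of Haefliger's unknotting \cite{Ha}. Because $\phi$ is smoothly isotopic to the standard embedding, this intersection admits a smooth cancellation, so Theorem \ref{thm:Legenvoid} (with Proposition \ref{lem:Whitney_trick}) applies and yields a compactly supported isocontact isotopy $\psi_t$ with $\psi_0=\psi$ and $\im(\psi_1)$ disjoint from $\S^{n}$; after this move $\psi_1$ is smoothly isotopic rel $\partial$ to $\phi$. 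A final smooth isotopy rel $\partial$, carried with its transported formal data, brings $\psi_1$ to a genuine isocontact embedding $\phi^1$ in the formal class of $(\phi,F_s)$. Concatenating the formal homotopies of Steps 1--2, the isocontact isotopy of Step 3, and this last smooth isotopy produces the required family $(\phi^t,F^t_s)_{t\in[0,1]}$, rel $\partial$, with $(\phi^0,F^0_s)=(\phi,F_s)$ and $F^1_s=d\phi^1$.

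\textbf{Main obstacle.} The delicate point is Step 3: one must produce the auxiliary linking Legendrian $\S^{n}$ in the correct normal form and show that hypothesis (2) is \emph{exactly} the vanishing of the obstruction to cancelling its intersection with $\im(\psi)$ --- i.e. one must identify the smooth knotting of a codimension-two contact disk with Legendrian intersection data and check the bookkeeping of the formal data throughout. Once this dictionary is set up, Theorem \ref{thm:Legenvoid} does the genuinely contact part of the work, and the rest is an exercise in the $h$--principle and in parametrized obstruction theory over a disk.
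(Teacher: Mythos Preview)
Your strategy is quite different from the paper's and contains a genuine gap. The paper does not reduce to an isotropic core; instead it modifies $(B,\xi)$ by inserting an overtwisted disk away from $\phi(\DD^{2n-1})$ (via a connected sum with $\ob(T^*S^n,\la_{\st};\tau_{S^n}^{-1})$), applies the Borman--Eliashberg--Murphy $h$--principle \cite{BEM} in this overtwisted ball to make $(\phi,F_s)$ genuinely isocontact, and then invokes Theorem~\ref{thm:Legenvoid} to disjoin the resulting contact disk from the Legendrian zero-section $S_\pi$ of the page at angle $\pi$. Once disjoint from $S_\pi$, a contact flow (Lemma~\ref{lemma:flowing}) pushes the disk out of the region where the overtwisted and original structures differ, and a purely smooth argument (Lemma~\ref{lem:disj}, Proposition~\ref{prop:connect}) arranges the whole formal family to avoid $S_\pi$ as well. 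The Legendrian Whitney trick is thus used to \emph{escape the overtwisted region}, not to unknot anything.

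The gap in your argument is Step~1. An $(n-1)$--disk in $(B^{2n+1},\xi)$ is subcritical isotropic, not Legendrian, and while the subcritical isotropic $h$--principle does hold rel boundary, your ``passing to the core is lossless'' claim is essentially Gromov's $h$--principle for isocontact embeddings of \emph{open} source, which is precisely what fails for the closed rel-$\partial\DD^{2n-1}$ problem you are trying to solve. Concretely, the contact retraction of $(\DD^{2n-1},\xi_{\st})\cong J^1\DD^{n-1}$ onto a thin neighbourhood of its core necessarily moves $\partial\DD^{2n-1}$, so you cannot shrink the image while keeping the boundary germ fixed; and when you thicken a genuine isotropic $\DD^{n-1}$ back up (note Lemma~\ref{lem:local_exten} produces a Legendrian ribbon, not a codimension-two contact submanifold), the isotropic neighbourhood theorem controls the result only near $\partial\DD^{n-1}$, not along the rest of $\partial\DD^{2n-1}=S^{2n-2}$. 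Indeed, if Step~1 truly worked then Steps~1--2 alone would already prove the theorem from Gromov's subcritical $h$--principle, with no need for Theorem~\ref{thm:Legenvoid} or overtwistedness --- which should give you pause. Step~3 then misfires on its own terms: Theorem~\ref{thm:Legenvoid} \emph{hypothesises} that the contact disk is smoothly standard, and in any case an isocontact isotopy is in particular a smooth isotopy, so it cannot alter the smooth isotopy class of $\psi$; it is not a tool for ``correcting the smooth type''.
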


Theorem \ref{thm:iso-contact_disk_embeddding_rel_boundary} constructs a solution to the core local model in our existence h-principle. In comparison to the recent work \cite{HondaHuang19}, it is crucial to emphasize that the contact embedding $\phi^1$ induces the standard contact structure $(\mathbb D^{2n-1}, \xi_{std})$, which is the difference between the study of contact embeddings and the existence h-principle for isocontact embeddings.

Theorem~\ref{thm:main} follows from Theorem~\ref{thm:iso-contact_disk_embeddding_rel_boundary}, thanks to the h-principle for isocontact embeddings between open contact manifolds \cite{EliashMisch,Gr86} applied to the formal isocontact embedding
$$(f_0, F_s)|_{N \setminus \overline{B_N}}:
N \setminus \overline{B_N} \lr M,$$
where $B_N$ is a Darboux ball in $(N,\xi_N)$. Indeed \cite[Theorem 12.3.1]{EliashMisch} provides a formal isocontact deformation of $(f_0, F_s)|_{N \setminus \overline{B_N}}$ to a genuinely isocontact embedding, and thus to the local model for Theorem \ref{thm:iso-contact_disk_embeddding_rel_boundary}.

\subsection{Intuition for Theorem \ref{thm:iso-contact_disk_embeddding_rel_boundary}}. Let us first consider the 3-dimensional case of a smooth knot $K\sse (S^3,\xi_{st})$ or, in the local context of Theorem \ref{thm:iso-contact_disk_embeddding_rel_boundary}, a properly embedded smooth arc $K_0\sse (B^3,\xi_{st})$ within a Darboux ball $(B^3,\xi_{st})$ which is positively transverse at the endpoints $\dd K_0$. The aim is to perform a smooth isotopy $K_t$, $t\in[0,1]$, relative to the boundary $\Op(B^3)$ such that $K_1$ is a transverse arc. In the 3-dimensional case this can be done explicitly, as explained in \cite[Theorem 3.3.1]{GeigesCont}. In short, the case where the transverse knot is positively transverse everywhere except at Legendrian subsets can be solved directly by averaging the positivity at the ends. Thus, the only significant reason for $K_0$ not to be a transverse knot is the existence of interval subsets $I\sse K_0$ where the arc is negatively transverse. The solution is to insert a 1-dimensional wrinkle \cite{EMWrinkle}, i.e. the transverse push-off of a Legendrian stabilization. In conclusion, in the 3-dimensional case the problem translates into a solvable question on plane curves and their slopes, an approach which does not exist in higher dimensions.

The crucial object in the 3-dimensional case is the notion of a transverse stabilization, which is neatly explained in \cite[Section 2.8]{Etnyre}. As depicted in \cite[Figure 21]{Etnyre}, the transverse stabilization can be understood as a {\it transverse} bypass. This is where intuition for the higher-dimensional case starts to arise. A Legendrian bypass \cite{HondaBypass} can be understood as half an overtwisted 2-disk and, even more precisely, the connected sum of a Legendrian arc with the boundary of an overtwisted disk is tantamount to a Legendrian stabilization. Thus, the overtwisted disk can be understood as an object which provides the necessary stabilizations required for the smooth arc to become a transverse (or Legendrian) arc. In fact, and this is crucial, only half of the overtwisted 2-disk is needed.

The coming proof of Theorem \ref{thm:iso-contact_disk_embeddding_rel_boundary} is based on the following 3-dimensional geometric heuristic. Instead of searching for half an overtwisted disk in the Darboux ball $(B^3,\xi_{st})$, which allows for transverse stabilizations, insert an entire overtwisted 2-disk $\D$ away from $\Op(K_0)$. This modifies the standard structure $(B^3,\xi_{st})$ to a overtwisted structure $(B^3,\xi_{ot})$, in which the h-principle for isocontact embeddings holds \cite{BEM,El}. Thus, the smooth inclusion $K_0\sse (B^3,\xi_{ot})$ can be made positively transverse, relative to the endpoints. By Gray's Isotopy Theorem, this can be considered as a smooth isotopy $\{K_t\}_{t\in[0,1]}$ within the fixed contact structure $\xi_{ot}$, such that $K_1$ is positively transverse. The h-principle does not a priori guarantee that the isotopy $\{K_t\}_{t\in[0,1]}$ only uses half the overtwisted 2-disk $\D$. (Although, in this 3-dimensional case, we know we can just use half overtwisted 2-disk, but such statement is not available in higher dimensions.) The argument then relies on proving that any such (formally transverse) isotopy $\{K_t\}_{t\in[0,1]}$ can be made disjoint from half of the overtwisted 2-disk $\D$ and that the isotopy $\{K_t\}_{t\in[0,1]}$ can therefore be pushed to exist within $(B^3,\xi_{st})$, where only half of the overtwisted 2-disk is available.

In the higher-dimensional case, the generalization of the arc $K_0\sse (B^3,\xi_\st)$ is a standardly embedded $(2n-1)$-dimensional disk $(\DD,\xi_\st)\sse (B^{2n+1},\xi_\st)$. The fact that Theorem \ref{thm:iso-contact_disk_embeddding_rel_boundary} is able to conclude an embedding of the standard contact structure $(\DD,\xi_\st)$ is a feature of both Theorem \ref{thm:Legenvoid} and the h-principle \cite{BEM}. In the 3-dimensional case, there is a unique contact structure induced in a transverse arc, whereas a codimension-2 contact embedding of a smooth disk $\DD$ does not necessarily induce the standard contact structure $(\DD,\xi_\st)$. In addition, the formal contact type of a smooth embedding in higher dimensions differs from the data of the self-linking number of a transverse knot, which entirely determines the formal transverse type of a smooth knot in $(S^3,\xi_\st)$. Finally, the proof of Theorem \ref{thm:iso-contact_disk_embeddding_rel_boundary} only works in higher dimensions, since it relies on the smooth Whitney trick \cite{Kir,Whitney44}, its Legendrian avatar Theorem \ref{thm:Legenvoid}, and the fact that the space of embedded $n$-sphere into $\R^{2n+1}$ is connected, which only hold for $n\geq2$. The following subsection comprises the details of our argument for Theorem \ref{thm:iso-contact_disk_embeddding_rel_boundary} in higher dimensions.

\subsection{Proof of Theorem~\ref{thm:iso-contact_disk_embeddding_rel_boundary}}. First, we shall modify the contact structure $(B,\xi)$, away from the image $\phi(\DD)$, by inserting an overtwisted disk. It is possible to achieve this by choosing a standard Legendrian unknot $\La_0\sse(B,\xi)$, and performing\footnote{Alternatively, choose a stabilized Legendrian unknot $\La \sse(B\setminus\DD,\xi)$, formally isotopic to $\La_0$, and perform one $(+1)$-surgery.} two $(+1)$-surgeries along $\La_0$. Let us do so in the framework of adapted contact open books \cite{CMP,Colin}. For that, choose a $(2n+1)$-dimensional Darboux ball $(\BB,\xi_\st)\sse(B\setminus\DD,\xi)$ and $p\in(\BB,\xi_\st)$ an interior point. This Darboux ball $(\BB,\xi_\st)$ is contactomorphic, relative to its boundary, to the contact connected sum $(\BB,\xi_\st)\#_p(S^{2n+1},\xi_\st)$ along (a neighborhood of) the point $p\in(\BB,\xi_\st)$. The contact structure $(S^{2n+1},\xi_\st)$ is supported by the adapted open book $(S^{2n+1},\xi_\st)=\ob(T^*S^n,\la_\st;\tau_{S^n})$, and the contact connected sum can be performed such that $p$ belongs to the zero section of the Weinstein page $P_0\cong (T^*S^n,\la_\st)$ at angle $0\in S^1$. In order to insert an overtwisted disk in this context, change the monodromy $\tau_{S^n}\in \Symp^c(T^*S^n,\la_\st)$ to its inverse $\tau^{-1}_{S^n}$, to obtain an overtwisted contact structure $(S^{2n+1},\xi_\ot)=\ob(T^*S^n,\la_\st;\tau^{-1}_{S^n})$, as we proved in \cite{CMP}. The connected sum $(\BB,\xi_\st)\#_p(S^{2n+1},\xi_\ot)$ is thus overtwisted and differs from $(\BB,\xi_\st)\#_p(S^{2n+1},\xi_\st)$ precisely in the gluing prescribed by the monodromy. Since the action of the monodromy can be concentrated in a neighborhood of the page $P_\pi$ at angle $\pi\in S^1$, these two contact connected sums can be assumed to be contactomorphic away from a neighborhood $\Op(P_\pi)$.

The formal isocontact embedding $(\phi,F_s):(\DD,\xi_\st)\lr(B,\xi)$ can be extended to have the modified overtwisted contact manifold $(B,\xi)\#(S^{2n+1},\xi_\ot)$ as a target, since its image lies away from $p$. Let us apply the h-principle \cite{BEM} for codimension-2 isocontact embeddings in overtwisted contact manifolds in order to obtain a family $\{(\phi^t,F^t_s)\}_{t\in[0,1]}$ of formally isocontact embeddings which starts at $(\phi,F_s)$ and finishes at a genuinely isocontact embedding $(\phi^1,F^1_s)$. Even though the image $(\phi^0,F^0_s)=(\phi,F_s)$ lies away from any overtwisted disk, the image of the embeddings $(\phi^t,F^t_s)$ might, for some $t\in[0,1]$, might be contained in the $(\BB,\xi_\st)\#_p(S^{2n+1},\xi_\ot)$ region near $p$. The crucial fact is that, in order to avoid the region $(\BB,\xi_\st)\#_p(S^{2n+1},\xi_\ot)$, it suffices to avoid the Legendrian sphere $S_\pi\sse P_\pi\cong(T^*S^n,\la_\st)$ given by the Legendrian lift of the (exact) Lagrangian zero section. This is explained in the following:

\begin{lemma} \label{lemma:flowing}
	Let $\varepsilon\in\R^+$, $\theta\in(\pi-\varepsilon, \pi +\varepsilon)$ and $K\sse N=(\mathbb{D}_{\varepsilon}(T^*S^n) \times (\pi-\varepsilon, \pi +\varepsilon),\ker\{\la_\st+d\theta\})$ be a compact subset of $N$ such that $K\cap (S^n\times\{\pi\}) =\emptyset$, where $S^n\sse T^*S^n$ is the inclusion given by the zero section. Then, there exists a compactly supported contact flow $\p_t\in\Cont(N)$ such that $\p_1(K)\sse\Op(\dd N)$.
\end{lemma}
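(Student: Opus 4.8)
\textbf{Proof plan for Lemma \ref{lemma:flowing}.} The plan is to write down an explicit compactly supported contact Hamiltonian on $N = (\mathbb{D}_\varepsilon(T^*S^n)\times(\pi-\varepsilon,\pi+\varepsilon),\ker\{\la_\st+d\theta\})$ whose time-one flow pushes the compact set $K$ out to $\Op(\dd N)$, using that $K$ misses the Legendrian sphere $S_\pi = S^n\times\{\pi\}$ (the Legendrian lift of the zero section at angle $\pi$). First I would identify $N$ with a piece of the $1$-jet space $J^1(S^n,\xi_\st) \cong (T^*S^n\times\R_\theta,\la_\st - d(-\theta))$, so that $S_\pi$ is precisely the $0$-wall $\{p=0,\theta=\pi\}$, i.e. the $1$-jet of the zero function. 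The non-compactness of $N$ is only in the $\theta$-direction (the fiber disk $\mathbb{D}_\varepsilon(T^*S^n)$ is already compact), so the real content is to flow $K$ toward the two boundary components $\theta = \pi\pm\varepsilon$; but since $K$ is compact and avoids $S_\pi$, there is a uniform lower bound $\delta>0$ on the distance from $K$ to $S_\pi$ in the $T^*S^n$-directions (i.e.\ $|p|\geq\delta$ on $K$, or $K$ lies over a compact set of the base away from... no — one should be careful: $K$ avoids the zero section \emph{at} $\theta=\pi$, not everywhere). So the correct statement is: the projection of $K$ to $T^*S^n \times \R_\theta$ avoids the codimension-$(n+1)$ submanifold $S^n\times\{\pi\}$, equivalently on $K$ either $|p|\neq 0$ or $\theta\neq\pi$.

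The key construction is a contact Hamiltonian of the form $H = \rho(|p|^2,\theta)$ where $\rho$ is chosen to generate a flow translating in the $\theta$-direction away from $\theta=\pi$ wherever $p$ is small, and cut off to be compactly supported. Concretely, the Reeb field of $\la_\st - d(-\theta) = \la_\st + d\theta$ is $\dd_\theta$, and for a Hamiltonian $H(p,q,\theta)$ the contact vector field $X_H$ has $\theta$-component essentially $H - p\cdot\dd_p H$ plus the $\dd_\theta$ of the generating function; choosing $H$ to depend only on $\theta$ (or on $\theta$ and a cutoff in $|p|^2$) makes $X_H$ a vertical translation $c(\theta)\dd_\theta$ on the region where the $|p|$-cutoff is constant, which is exactly the locus near $S_\pi$. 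So: pick a smooth $\beta\colon\R_{\geq0}\to[0,1]$ with $\beta\equiv 1$ near $0$ and $\beta\equiv 0$ for $|p|^2 \geq (\delta/2)^2$, and a smooth $\gamma\colon(\pi-\varepsilon,\pi+\varepsilon)\to\R$ which is negative for $\theta<\pi$, positive for $\theta>\pi$, vanishes only at $\pi$, and decays near the boundary; set $H = \beta(|p|^2)\,\gamma(\theta)$. The resulting flow moves points with $|p|$ small away from the wall $\theta=\pi$ in finite time, and is the identity where $|p|$ is bounded below; a routine ODE-escape argument (the $\theta$-coordinate on a point with small $|p|$ strictly increases in absolute deviation from $\pi$ and reaches $\Op(\dd N)$) then shows that after rescaling time to $t\in[0,1]$ and possibly composing with finitely many such pushes, every point of $K$ lands in $\Op(\dd N)$. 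One must also ensure $|p|$ stays small along the flow where $\beta\equiv 1$, which holds since there $X_H$ has no $p$-component, so the region $\{|p|\text{ small}\}$ is flow-invariant and the argument is consistent.

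A cleaner alternative, which I would actually prefer to write, is to avoid ODE estimates entirely: since $K$ is compact and misses $S_\pi$, cover $K$ by finitely many Darboux-type charts in each of which $K$ is displaced by an obvious model flow, and patch. Even simpler: the complement $N\setminus S_\pi$ deformation retracts onto $\Op(\dd N)$ (removing the Legendrian sphere $S^n\times\{\pi\}$ from $\mathbb{D}_\varepsilon(T^*S^n)\times(\pi-\varepsilon,\pi+\varepsilon)$ leaves a set that retracts to the boundary), and one can realize a suitable isotopy of this retraction by an ambient contact isotopy using the contact $h$-principle / Gray stability, tracking that $K$ is carried into $\Op(\dd N)$. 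However, since we want an \emph{explicit} compactly supported contact flow (this is used to then transplant the embedding isotopy back, as in the proof of Theorem \ref{thm:Legenvoid}), the Hamiltonian construction above is the appropriate level of detail.

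\textbf{Main obstacle.} The delicate point is compact support together with the well-definedness of the flow: as in the contact cut-offs in the proof of Theorem \ref{thm:Legenvoid}, naively cutting off the Hamiltonian in the $|p|$-variable introduces a $\dd_{p}$-component in $X_H$ proportional to $\dd_{|p|^2}\beta \cdot \gamma(\theta)\cdot(\text{something})$, which could drive points out of $\mathbb{D}_\varepsilon(T^*S^n)$ or make the flow exist only for a short time. The fix — exactly as in Lemma \ref{lem:partialpush} and the construction of $\hat R$ in Section \ref{ssec:LegWhitneyTrick} — is to take the amplitude of $\gamma$ small relative to $\varepsilon$ and $\delta$, so that the spurious $\dd_p$-displacement over the finite time needed is bounded by, say, $\delta/4$, keeping everything inside $\mathbb{D}_\varepsilon(T^*S^n)$ and the flow defined for all $t\in[0,1]$; one then possibly iterates the push a bounded number of times to cover all of $K$. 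I expect this rescaling-and-iteration bookkeeping to be the only real work; the geometric content (that missing the Legendrian sphere $S_\pi$ is the sole obstruction to being displaceable into $\Op(\dd N)$) is immediate.
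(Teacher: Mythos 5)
There is a genuine gap in the proposed Hamiltonian: cutting off in the $|p|$\nobreakdash-direction kills the flow exactly where it is still needed. Your Hamiltonian $H=\beta(|p|^2)\gamma(\theta)$ has $\beta\equiv 0$ for $|p|\geq\delta/2$, so the resulting contact vector field vanishes identically on $\{|p|\geq\delta/2\}$. But $\delta$ is the distance from $K$ to $S_\pi$, which is in general far smaller than $\varepsilon$; a point of $K$ with, say, $|p|=\varepsilon/2$ and $\theta=\pi$ lies well inside $N$, is not in $\Op(\dd N)$, and is fixed by your flow (as is every point with $|p|\geq\delta/2$). Iterating the same flow cannot help, since those points remain fixed points. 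So $\phi_1(K)\not\subseteq\Op(\dd N)$, and the conclusion of the lemma fails. A secondary computational slip: even where $\beta\equiv 1$, i.e.\ $H=\gamma(\theta)$, the contact vector field for $\alpha=\la_\st+d\theta$ is $X_H=\gamma(\theta)\partial_\theta+\gamma'(\theta)\,p\,\partial_p$ — the $dH(R)\,\alpha$ term contributes a $p\,\partial_p$ piece — so the region $\{|p|\text{ small}\}$ is not flow-invariant as you claim; this is not fatal, but the assertion as written is incorrect.

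The missing idea is that the flow must push $K$ outward in \emph{both} directions simultaneously: away from $\theta=\pi$ in the angular direction \emph{and} away from the zero section $p=0$ in the radial (Liouville) direction, since $\dd N$ has both an angular component $\{\theta=\pi\pm\varepsilon\}$ and a radial component $\{|p|=\varepsilon\}$. This is exactly what the paper does. Taking the contact Hamiltonian $H=\theta-\pi$ (and only then cutting off near $\dd N$, not near $p=0$) yields
$$X_H=(\theta-\pi)\,\partial_\theta + p\,\partial_p,$$
i.e.\ the sum of the Liouville vector field of $T^*S^n$ and the radial field in $\theta$ centered at $\pi$. This $X_H$ vanishes precisely on $S_\pi=S^n\times\{\pi\}$ and is a pseudo-gradient for $f=(\theta-\pi)^2+|p|^2$ (indeed $df(X_H)=2f\geq0$), so every point of $N\setminus S_\pi$ escapes any given compact subset in finite time. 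Choosing a cut-off $\chi$ depending only on (something like) $f$, equal to $1$ on a compact set $C\supseteq K$ with $N\setminus C\subseteq\Op(\dd N)$ and equal to $0$ near $\dd N$, one still has $df(X_{\chi H})\geq0$, so trajectories starting in $K$ never re-enter $C$ once they leave it; after a time reparametrization this gives the desired compactly supported $\phi_t$ with $\phi_1(K)\subseteq\Op(\dd N)$. This is the content of the paper's one-paragraph proof, which passes to symplectization coordinates $t=\log$(radial coordinate) so that the Liouville vector field becomes $\partial_t$. (As an aside: the paper's own phrasing, that $\partial_t+\partial_\theta$ is contact, is not literally correct — that vector field satisfies $L_X\alpha=\alpha-d\theta$, not $L_X\alpha=g\alpha$ — but the intended vector field $p\,\partial_p+(\theta-\pi)\partial_\theta$, with Hamiltonian $\theta-\pi$, is contact and is indeed a pseudo-gradient as described.) Your two ``cleaner alternatives'' (patching Darboux charts, or realizing a deformation retraction of $N\setminus S_\pi$ onto $\Op(\dd N)$ ambiently) are not cleaner here: contact isotopies do not patch naively, and there is no general mechanism to promote a smooth retraction to an ambient compactly supported contact isotopy; the explicit Hamiltonian really is the right level of argument.
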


\begin{proof}
The complement of the zero section $S^n\sse(\mathbb{D}_{\varepsilon}(T^*S^n),\la_\st)$ is Liouville symplectomorphic to the symplectization $(\S(T^*S^n) \times \R,e^t\alpha_{std})$, $t\in\R$, of the standard unit cotangent bundle $(\S(T^*S^n),\ker\alpha_\st)$. Thus, the complement of the zero section is modeled on the contactization of the symplectization of $(\S(T^*S^n),\ker\alpha_\st)$, with contact form $e^t\alpha_{std} +d\theta$. The vector field $X= \partial_t + \partial_\theta$ is contact and a pseudo--gradient for the function $t+ \theta$. Choose a compactly supported cut-off function $\hat{H}$ that coincides with its contact Hamiltonian $H$ on a compact set of $N$ whose complement is a small open negihborhood of $\partial N$. The flow associated to the contact vector field generated by the contact Hamiltonian $\hat{H}$ satisfies the requirements, as it pushes the complement of the zero section to $\Op(\dd N)$.
\end{proof}

Lemma \ref{lemma:flowing} shows that an embedding whose image lies on the complement of the Legendrian $S_\pi$ page inside $(\BB,\xi_\st)\#_p(S^{2n+1},\xi_\ot)$ can be pushed-off away, with a contact isotopy, from the region where the contact domains $(\BB,\xi_\st)\#_p(S^{2n+1},\xi_\ot)$ and $(\BB,\xi_\st)\#_p(S^{2n+1},\xi_\st)$ differ. Since $S_\pi$ is a standardly embedded Legendrian sphere in a ball $(B,\xi)$, the Legendrian Whitney Trick, stated in Theorem \ref{thm:Legenvoid} above, can now be used to remove the intersection between the contact disks given by the image of the isocontact embedding $(\phi^1,F^1_s)$ and the Legendrian sphere $S_\pi$. At this stage, the proof could proceed by establishing a parametric version of Theorem \ref{thm:Legenvoid} which would allow for the removal of intersections between the Legendrian sphere $S_\pi$ and $(\phi^t,F^t_s)$ for all $t\in[0,1]$. Instead, given that the maps $(\phi^t,F^t_s)$ are just formal isocontact embeddings, it suffices to treat this as a differential topology problem, rather than a contact topological problem, as in the following

\begin{lemma}\label{lem:disj}
	Let $\phi_t: \DD^{2n-1} \lr B^{2n+1}$, $t\in[0,1]$, be a family of smoothly standard proper embeddings of the disk such that $n\geq2$, $\phi_1$ does not intersect  $S_\pi  \sse B$, a standard $n$-sphere in the ball, and $\phi_t(\Op(\DD^{2n-1}))$ is independent of $t\in[0,1]$. Then, there exists a deformation through paths of embeddings $\phi_{t,s}$ to a new $1$--parametric family of embeddings $\phi_{t,1}: \DD \lr B$ such that:
	\begin{itemize}
		\item[-] $\phi_{t,0}= \phi_t$, $\phi_{t,s}= \phi_0$ in $\Op(\dd(\DD^{2n-1}))$, for all $(t,s)\in[0,1]\times[0,1]$,
		\item[-] For all $s\in [0,1]$, $\phi_{0,s}= \phi_0$ and $\phi_{1,s}= \phi_1$, 
		\item[-] The images of $\phi_{t,1}$ do not intersect $S_{\pi}$, for all $t\in[0,1]$.
	\end{itemize}
\end{lemma}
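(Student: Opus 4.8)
The plan is to reduce the statement to a relative unknotting result in differential topology, exactly in the spirit of Haefliger's theorem on the connectedness of the space of high-codimension embeddings. The key numerical facts are that $\dim(\DD^{2n-1})=2n-1$, $\dim(B^{2n+1})=2n+1$, so the codimension is $2$, and that the codimension of $S_\pi$ is $n+1\geq 3$; moreover $n\geq 2$ guarantees $2(2n-1)<3(2n+1)-3$ fails in general, so one cannot directly appeal to Haefliger for the disk itself, but the dimension of $S_\pi$ relative to $\DD$ is what matters. First I would reformulate the data as a single map. Consider the parametrized family $\Phi: \DD^{2n-1}\times[0,1]\lr B^{2n+1}\times[0,1]$, $\Phi(x,t)=(\phi_t(x),t)$, which is a proper embedding of a $2n$-manifold with corners into a $(2n+2)$-manifold, together with the ``track'' of the fixed sphere $\Sigma_\pi := S_\pi\times[0,1]\subset B\times[0,1]$, an $(n+1)$-manifold. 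The hypothesis says $\Phi$ and $\Sigma_\pi$ are disjoint over $t\in\{0\}\cup\{1\}$ (using $\phi_1\cap S_\pi=\emptyset$ and, after a preliminary small isotopy supported near $t=0$, we may also assume $\phi_0\cap S_\pi=\emptyset$, since $\phi_0=\phi_t$ near $\dd\DD$ for all $t$ and $S_\pi$ is in the interior — but in fact we only need the endpoint conditions the Lemma already records). The goal becomes: deform $\Phi$, rel $(\dd\DD^{2n-1}\times[0,1])\cup(\DD\times\{0,1\})$, to a family disjoint from $\Sigma_\pi$.

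The second step is the transversality and Whitney-trick normal form. By a generic $C^\infty$-small perturbation of the family $\{\phi_t\}$, rel endpoints in $t$ and rel $\Op(\dd\DD)$, the map $\Phi$ is transverse to $\Sigma_\pi$; the preimage $\Phi^{-1}(\Sigma_\pi)$ is then a compact manifold $P$ of dimension $2n+(n+1)-(2n+2)=n-1$, properly embedded in $\DD\times(0,1)$, with $\dd P=\emptyset$ (it misses both $t=0,1$ and $\dd\DD$). Projecting $P$ to the $[0,1]$-factor gives a Morse function whose critical points, by a further generic perturbation, are nondegenerate and have distinct critical values; the standard intersection-cancellation machinery (Whitney trick, valid here because $2n-1\geq 3$ and the ambient $B$ is simply connected, being a disk) lets us pair up and cancel critical points of adjacent index, so that after an isotopy of the family we may assume $P$ has \emph{no} critical points for the $t$-projection, i.e.\ $P\cong P_0\times[0,1]$ for $P_0=\phi_0^{-1}(S_\pi)=\emptyset$ — hence $P=\emptyset$. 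Equivalently, and more cleanly: since $\Phi^{-1}(\Sigma_\pi)$ is a closed manifold in the interior that is empty over the two ends, a relative Whitney/Haefliger argument produces an isotopy of $\Phi$, fixed over the ends and near $\dd\DD$, pushing $\Phi$ off of $\Sigma_\pi$ entirely. This isotopy of $\Phi$, read off the $\DD$-factor, is precisely the desired deformation $\phi_{t,s}$.

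The third step is to honor the constraints of the Lemma's conclusion. The isotopy $\phi_{t,s}$ produced above is automatically rel $\Op(\dd\DD^{2n-1})$ because the Whitney disks and supporting tubes can be chosen in the interior of $B$ away from $\phi_t(\Op(\dd\DD))$, which is $t$-independent and (after the preliminary adjustment) disjoint from a neighborhood of $S_\pi$. It is rel $t\in\{0,1\}$ because $\Phi^{-1}(\Sigma_\pi)$ is already empty near those ends, so the cancellation data is supported in a compact sub-interval of $(0,1)$; thus $\phi_{0,s}=\phi_0$ and $\phi_{1,s}=\phi_1$ for all $s$. Finally $\phi_{t,1}$ has image disjoint from $S_\pi$ for every $t$ because $\Phi_1:=\Phi\circ(\text{final isotopy})$ misses $\Sigma_\pi=S_\pi\times[0,1]$.

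\textbf{Main obstacle.} The delicate point is the applicability of the Whitney trick to cancel the critical points of the projection $P\to[0,1]$, equivalently to cancel the intersections of $\Phi$ with $\Sigma_\pi$: one needs the relevant Whitney disks to be embedded and disjoint from each other and from $\Phi(\DD\times[0,1])$ and $\Sigma_\pi$ except along their boundary arcs. The dimension count for embedded Whitney disks requires $2n-1\geq 3$ (so the disk factor is at least $3$-dimensional) and that the complement $B\setminus(\text{disk}\cup S_\pi)$ be simply connected — both hold since $B$ is a ball, $n\geq 2$, and $S_\pi$ has codimension $\geq 3$. The algebraic cancellation obstruction (signed count of intersection points along each Whitney-pair, living in $\mathbb{Z}\pi_1=\mathbb{Z}$) vanishes because over the two ends $\Phi$ is disjoint from $\Sigma_\pi$, which forces the total algebraic intersection number to be zero and, by the standard reorganization into a minimal model, each index-pair to cancel. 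One must check this vanishing is genuinely homological and not merely numerical — this is where I expect the bookkeeping to be most fragile, and it is handled exactly as in Haefliger's relative unknotting theorem, to which the whole argument can ultimately be deferred.
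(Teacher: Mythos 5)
Your approach is genuinely different from the paper's, and unfortunately it contains two real gaps. The paper does \emph{not} try to isotope the family $\{\phi_t\}$ directly off of $S_\pi$. Instead it exploits the product structure of $B$: since $B\setminus\phi_1(\DD)\cong\S^1\times B^{2n}$, and Proposition~\ref{prop:connect} shows the space of embedded $n$-spheres in $\S^1\times B^{2n}$ is connected (reducing, after a Whitney move on the $\S^1$-winding, to Haefliger's theorem on the connectedness of embeddings $S^n\hookrightarrow B^{2n+1}$), one can ambient-isotope the \emph{sphere} $S_\pi$ to an arbitrarily small sphere $\Psi_1(S_\pi)$ inside $B\setminus\phi_1(\DD)$, chosen small enough to be disjoint from the whole compact family $\bigcup_t\phi_t(\DD)$. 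Conjugating, $\phi_{t,s}:=\Psi_s^{-1}\circ\phi_t$ yields the required deformation. Note the asymmetry that makes this work: $S_\pi$ has dimension $n$ and codimension $n+1\geq 3$, squarely in the range where sphere-embedding spaces are connected.

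Your proposal goes in the other direction, trying to remove the intersections of the track $\Phi(\DD\times[0,1])\subset B\times[0,1]$ from $\Sigma_\pi=S_\pi\times[0,1]$. There are two gaps. First, the intersection here is $(n-1)$-dimensional for $n\geq 2$, not a finite set of points: $\dim\Phi(\DD\times[0,1])+\dim\Sigma_\pi-\dim(B\times[0,1])=2n+(n+1)-(2n+2)=n-1\geq 1$. Your dimension and $\pi_1$ checks (embedded Whitney disks, signed intersection count in $\Z\pi_1$) are the ones relevant to the \emph{zero}-dimensional Whitney trick, and they simply do not apply to positive-dimensional intersections. The positive-dimensional machinery (Haefliger, Hudson, Goodwillie) requires metastable-range inequalities that you neither state nor verify, and the naive count $2\dim\Phi+\dim\Sigma_\pi=5n+1$ versus $2\dim(B\times[0,1])-3=4n+1$ is already on the wrong side. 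Your fallback Morse-theoretic argument (project $P\to[0,1]$ and cancel critical points) changes the shape of $P$ but by itself does not remove intersections of $\Phi$ with $\Sigma_\pi$; turning handle cancellations in $P$ into disjoining isotopies of $\Phi$ is precisely the content that would have to be supplied. Second, even granting a disjoining isotopy of $\Phi$ as a $2n$-submanifold of $B\times[0,1]$, there is no reason it should preserve the projection to $[0,1]$. ``Reading off the $\DD$-factor'' only makes sense if the isotopy is level-preserving, i.e.\ carries each $B\times\{t\}$ to itself; a generic ambient isotopy of $B\times[0,1]$ does not do this, and the resulting submanifold need not be the graph of a family of embeddings $\DD\to B$. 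You would need a genuinely parametric (fiberwise) version of the Whitney argument, which is a substantially harder statement and is exactly what the paper avoids by isotoping the low-dimensional object $S_\pi$ instead.
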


\begin{proof} The diffeomorphism $B^{2n+1}\cong \DD^{2n-1}\times \DD^2$ implies that $B\setminus\phi_1(\DD)$ is diffeomorphic to $S^1\times B^{2n}$. Proposition \ref{prop:connect}, proven below, shows that the space of smoothly embedded $n$-spheres in $B^{2n+1}\cong \DD^{2n-1}$ is connected. By deforming $S_\pi$ to a sufficiently small sphere with (the endpoint of) an isotopy $\Psi_1$, the composition of $\Psi_1^{-1}\circ\phi_t$ is our required deformation.
\end{proof}

In conclusion, the initial formal isocontact embedding is made genuinely isocontact by inserting an overtwisted disk and using the $h$-principle \cite{BEM}. Theorem \ref{thm:Legenvoid} is used to ensure that the resulting isocontact embedding avoids the Legendrian sphere $S_\pi$, i.e. that it avoids part of the overtwisted disk. Lemma \ref{lemma:flowing} is then used to flow the image of the isocontact embedding away from the region where the overtwisted disk is inserted, and Lemma \ref{lem:disj} ensures that the formal isocontact isotopy is also disjoint from $S_\pi$. This yields the required isocontact embedding for Theorem \ref{thm:iso-contact_disk_embeddding_rel_boundary}.\hfill$\Box$


Let us conclude the article with Proposition \ref{prop:connect}, on the connectedness of the space of embeddings into $\S^1 \times B^{2n}$. This is the higher-dimensional analogue of the $3$-dimensional arguments in \cite{hatcher} on the topology of the moduli space of unknots. The argument strongly relies on the connectedness of the space of embeddings into $B^{2n+1}$ case, as proven by A. Haefliger in \cite[Theorem 1]{Ha0}.

\begin{proposition} \label{prop:connect}
The space of standard embeddings of the $n$-sphere into $\S^1 \times B^{2n}$ is connected, for $n\geq 2$.
\end{proposition}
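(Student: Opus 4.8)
The strategy is to reduce the statement to Haefliger's unknotting theorem \cite[Theorem 1]{Ha0}, by showing that any standard embedding $f\colon\S^n\hookrightarrow\S^1\times B^{2n}$ can be isotoped so that its image lies inside a coordinate $(2n+1)$--ball $\BB\subset\S^1\times B^{2n}$ (for $n\geq2$ this will in fact hold for every smooth embedding). Granting this, the proof concludes at once: inside $\BB$ all smooth embeddings of $\S^n$ are isotopic by \cite[Theorem 1]{Ha0}, so $f$ is isotopic within $\BB$ to a small round sphere near an interior point; and any two embedded $(2n+1)$--balls in the connected manifold $\S^1\times B^{2n}$ are ambient isotopic, so that small sphere is isotopic to the fixed standard model. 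Hence every standard embedding lies in the path component of the standard model.

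To place $f(\S^n)$ inside a ball it suffices to isotope $f$ so that $\pi\circ f\colon\S^n\to\S^1$ becomes non--surjective, where $\pi\colon\S^1\times B^{2n}\to\S^1$ is the projection; then $f(\S^n)$ avoids a fibre disk $\Sigma=\{\ast\}\times B^{2n}$ and therefore lands in a region $(\S^1\setminus\{\ast\})\times B^{2n}\cong\R\times B^{2n}$, any compact part of which sits in a coordinate ball. Perturb $f$ to be transverse to $\Sigma$, so that $M:=f^{-1}(\Sigma)$ is a closed two--sided $(n-1)$--submanifold of $\S^n$; since $n\geq2$ one has $H^1(\S^n;\Z)=0$, so $M$ is null--homologous and separates $\S^n$. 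To remove $M$ I would pass to the lift $\tilde g\colon\S^n\to\R$ of $\pi\circ f$ along the universal covering $q\colon\R\to\S^1$, which exists because $\pi_1(\S^n)=1$, and induct on the ``winding'' $w(f):=\max\tilde g-\min\tilde g$: once $w(f)$ drops below the period of $\S^1$ the map $\pi\circ f=q\circ\tilde g$ misses a point and we are done. For the inductive step, pick a regular value $c$ of $\tilde g$ with $\max\tilde g-c$ bounded away from $0$ and from a full period; then the top cap $E:=\tilde g^{-1}([c,+\infty))$ has image $f(E)$ contained in a ball--shaped region $q([c,\max\tilde g])\times B^{2n}$, and the step is to isotope $f$ so as to drag $f(E)$ downward past the level $c$, which strictly decreases $w(f)$ by a definite amount.

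The heart of the matter, and the step I expect to be the main obstacle, is realizing this drag through embeddings: pushing $f(E)$ (together with a collar of $\partial E=M_c$) past $\Sigma_c=\{q(c)\}\times B^{2n}$, one must slide it through the remaining sheet $f(\S^n\setminus E)$, and the $(n+1)$--dimensional sweep--out of the push meets $f(\S^n\setminus E)$ generically in finitely many transverse points of the $(2n+1)$--dimensional ambient region. These are cancelled in pairs by the smooth Whitney trick \cite{Kir,Whitney44}, which applies because the ambient dimension is $2n+1\geq5$ and the region $(\S^1\times B^{2n})\setminus\Sigma\cong\R\times B^{2n}$ where the move takes place is simply connected, and because the algebraic intersection number vanishes --- ultimately since $\pi\circ f$ is null--homotopic ($\pi_n(\S^1)=0$ for $n\geq2$). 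Verifying this vanishing, and checking that the whole construction can be performed downstairs in $\S^1\times B^{2n}$ (rather than in the cover $\R\times B^{2n}$) so that the resulting family is a genuine isotopy of $f$, is where the dimensional hypothesis $n\geq2$ is essential; the argument runs in parallel with Haefliger's proof of unknotting in high codimension \cite[Theorem 1]{Ha0} and with the three--dimensional reasoning on the moduli space of unknots in \cite{hatcher}. A minor additional point is that one may first reduce the number of components of $M$ by finger moves, so that only one ``period'' of $f(\S^n\setminus E)$ obstructs each push and the bookkeeping of the Whitney cancellations stays finite.
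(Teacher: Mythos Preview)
Your overall strategy --- reduce to a coordinate ball and then invoke Haefliger's unknotting theorem \cite{Ha0} --- matches the paper's endgame, but the route you take is genuinely different. The paper projects onto the $B^{2n}$ factor: it perturbs $\pi_{B^{2n}}\circ e:\S^n\to B^{2n}$ to a generic immersion, adds local kinks so that the signed self--intersection is zero, and cancels the remaining double points via Whitney moves whose $2$--disks are \emph{lifted} to $\S^1\times B^{2n}$. Once the $B^{2n}$--projection is embedded, the degree--zero map to $\S^1$ compresses the sphere into a slab. You instead project onto the $\S^1$ factor and try to compress the $\S^1$--extent directly by dragging caps.

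There is a genuine gap at your Whitney step when $n=2$. You are cancelling transverse intersections between an $(n{+}1)$--dimensional sweep and the $n$--dimensional piece $f(\S^n\setminus E)$ inside a $(2n{+}1)$--manifold. For the Whitney $2$--disk to have interior disjoint from the sweep by general position one needs $2+(n{+}1)<2n{+}1$, i.e.\ $n\geq3$; for $n=2$ the Whitney disk generically meets the $3$--dimensional sweep in points and neither choice of which submanifold to slide avoids creating new intersections. The paper's approach dodges exactly this: by lifting the Whitney disk to $\S^1\times B^{2n}$ it only ever needs the $2$--disk to avoid the $n$--dimensional sphere $e(\S^n)$, and $2+n<2n{+}1$ already for $n\geq2$. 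So the paper's projection to $B^{2n}$ is not an arbitrary choice --- it is what makes the $n=2$ case go through.

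A secondary issue: your claim that the algebraic intersection of the sweep with $f(\S^n\setminus E)$ vanishes ``since $\pi\circ f$ is null--homotopic'' is not the right mechanism. Unwinding the construction, those intersection points correspond to certain double points of the projection $m:\S^n\to B^{2n}$, and the signed self--intersection of such an immersion need not vanish a priori; this is precisely why the paper inserts kinks before attempting any cancellation. Your argument would need an analogous preparatory move, and null--homotopy into $\S^1$ does not supply it.
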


\begin{proof}

Let us denote $X=\S^1 \times B^{2n}$, and $\pi: X \lr B^{2n}$ be the projection into the second factor. Let $e:S \lr X$ be a standard embedding, with components $e=(\theta, m)$, where $\theta$ denotes the angle $\theta\in\S^1$ and assume, after genericity and possibly adding a finite number of kinks, that the smooth map $\pi\circ e:S\lr B^{2n}$ is a generic immersion and the self-intersection points are all paired, i.e. the signed self-intersection number is zero. Here, a kink is a smooth isotopy of the embedding $e:S\lr X$ which adds exactly a double point on its image $\pi(e(S))$ of the required sign. 

Let us now choose a pair of canceling points $q_0,q_1\in B^{2n}$ for $\pi(e(S))$ and a Whitney disk $d:\D^2\lr B^{2n}$ for the smooth map $\pi\circ e:S\lr B^{2n}$. The sliding, through a Whitney-move isotopy \cite{Kir}, along this Whitney disk yields a homotopy through immersions $L_t$ of the immersion $L_0= \pi(e(S))$ into an embedded sphere $L_1$. This homotopy lies entirely in $B^{2n}$, and we need to choose a lift for the family of maps $m_t: S \lr B$, defined by $L_t=m_t(S)$, to a family of embeddings $e_t: S \lr X$. For instance, choose a lift $\widetilde d:\D^2\lr \S^1\times B^{2n}$ for the Whitney disk $d:\D^2\lr B^{2n}$ such that the interior of $\widetilde d(\D^2)$ does not intersect $S$, the lower hemisphere of $\dd(\widetilde d(\D^2))$ still intersects $e(S)$ and the upper hemisphere of $\dd(\widetilde d(\D^2))$ does not intersect $e(S)$.

The Whitney disk $\widetilde d:\D^2\lr \S^1\times B^{2n}$ can now be used to perform a Whitney-move isotopy along it such that both images $e(S)$ and $\pi(e(S))$ through this smooth isotopy remain embedded. Finally, given that the degree of the map $S\lr\S^1$ must be zero, we can isotope the smooth sphere $S$ to live within the region $\Op(p)\times B^{2n}$, where $p\in\S^1$ is a point. This yields an embedding of the smooth $n$-sphere $S$ inside $B^{2n+1}$. Then, \cite[Theorem 1]{Ha0} shows that - for $n\geq2$ - the space of standard embeddings of an $n$-sphere into $B^{2n+1}$ is connected, concluding the required result.
\end{proof}

\end{document}